 \newtheorem{thm}{Theorem}[section]
 \newtheorem{lem}[thm]{Lemma}
 \theoremstyle{definition}
 \theoremstyle{remark}
 \numberwithin{equation}{section}
\newcommand{\D}{{\mathbb D}}
\newcommand{\C}{{\mathbb C}}
\def\D(a,r){\mathbb D(a,r)}
\def\D\m\D(a,r_0){\Delta\setminus\Delta(a,r_0)}
\def\D{\mathbb D}
\begin{document}

%
%
%
%
%
%
%
%
%

\title[Characterizations of Dirichlet-type Spaces]
 {Characterizations of Dirichlet-type Spaces}

\author[Xiaosong Liu]{Xiaosong Liu}

\address{%
    Department of Mathematics\\
    Shantou University\\
    Shantou 515063, China}

\email{gdxsliu@163.com}

\thanks{This work was  supported by NNSF of China (Grant No. 11171203, 11201280), New Teacher's Fund for Doctor Stations, Ministry of Education  (Grant No.20114402120003) and NSF of Guangdong
Province (Grant No. 10151503101000025, S2011010004511, S2011040004131). }
\author{Gerardo R. Chac\'on$^\dag$}\thanks{$^\dag$ This author is partially supported by Pontificia Universidad Javeriana, (Research Proyect No. 5533)}
\address{Departamento de Matematicas\\
   Pontificia Universidad Javeriana\\
   Cra. 7 No. 43-82\\
    Bogotá, Colombia}
\email{chacong@javeriana.edu.co}
\author{Zengjian Lou*}\thanks{$^*$Corresponding author.}
\address{Department of Mathematics\\
    Shantou University\\
    Shantou 515063, China}
\email{zjlou@stu.edu.cn}

\subjclass{Primary 30D45; Secondary 30D50}

\keywords{Dirichlet-type spaces; characterizations; Decomposition theorem.}

\date{\today}

\begin{abstract}
We give three characterizations of the Dirichlet-type spaces $D(\mu)$. First we characterize $D(\mu)$ in terms of a double integral and in terms of the mean oscillation in the Bergman metric, none of them involve the use of derivatives. Next, we obtain another characterization for $D(\mu)$ in terms of higher order derivatives.  Also, a decomposition theorem for $D(\mu)$ is established.
\end{abstract}

\maketitle
\section{Introduction}
Let $\D$ be the unit disk and $H(\D)$ be the analytic function on $\D$. Given a positive Borel measure $\mu$ defined on the boundary of the unit disc $\partial\D$ denote by $P_\mu$ the positive harmonic function defined on the unit disc $\D$ as \[P_\mu(z)=\displaystyle\int_0^{2\pi}\frac{1-|z|^2}{|e^{it}-z|^2}\frac{d\mu(t)}{2\pi}.
\]

The Dirichlet type space $D(\mu)$ is defined as the space of all analytic functions on $\D$ such that \[\int_{\D}|f'(z)|^2 P_\mu(z) dA(z)<\infty.\]
It was shown in \cite{Rich91} that the space $D(\mu)$ is contained as a set in the Hardy space $H^2$, consequently a norm on $D(\mu)$ can be defined as \[\|f\|_{D(\mu)}^2:=\|f\|_{H^2}^2+\int_{\D}|f'(z)|^2 P_\mu(z) dA(z).\]
If $\mu=0$, then define $D(\mu)=H^2$. Notice that if $d\mu=dm$ is the arc-length Lebesgue measure on $\partial\D$, then the Dirichlet-type space $D(m)$ coincides with the classical Dirichlet space $\mathcal{D}$.

Dirichlet-type spaces were introduced by Richter in \cite{Rich91} when investigating analytic two-isometries. These spaces have been studied ever since by several authors, see for example \cite{AA}, \cite{GR1}, \cite{GR2}, \cite{CFS}, \cite{CR1}, \cite{CR2}, \cite{Rich91}, \cite{SD}, \cite{Sa2} and \cite{SS}.

The aim of this article is to give characterizations of the spaces $D(\mu)$. We give a characterization of the spaces $D(\mu)$ which avoids the use of derivatives and a characterization in terms of  the mean oscillation  in the Bergman metric.  We also  give a characterization that makes use of high-order derivatives. Finally, as the main result of this paper, we establish an atomic decomposition theorem for  $D(\mu)$.

Derivative-free and higher-order derivatives characterizations of function spaces have received attention in the last years. In \cite{BP} this problem is studied in the setting of Besov spaces in order to characterize the boundedness of certain type of Hankel operators. In \cite{WZ1} and \cite{WZ} the problem is studied for $Q_p$ spaces. The problem of finding an atomic decomposition for a given function space has been extensively studied. For example, it has been established in the case of Bloch spaces, Dirichlet space, BMOA, VMOA and $Q_p$ spaces. We refer to \cite{R}, \cite{RS2}, \cite{RW}, \cite{WX} and the references therein.

The article is distributed as follows. In the following section we give some preliminary notions. In Section 3, we show two derivative-free characterizations of $D(\mu)$ whereas in Section 4 we give a further characterization based on higher-order derivatives. The  decomposition theorem of  $D(\mu)$ is shown in Section 5.

\section{Notations}
For a positive finite Borel measure $\mu$ on  $\partial\D$, we consider the family of functions
\[
P_{\mu_{r}}(z)=\int_{\partial\D}\frac{r^{2}(1-|z|^2)}{|\zeta-rz|^2}d\mu(\zeta),\,\,\,\,z\in\D,\,r\in(0,1).\eqno(2.1)
\]
It is well-known (\cite{SS}) that $P_{\mu_{r}}(z)$ is a subharmonic function and
\[
\lim_{r\rightarrow 1^{-}}P_{\mu_{r}}(z)=P_{\mu}(z).\eqno(2.2)
\]

Following \cite{RS}, we define the local Dirichlet integral of $f$ at $\lambda\in\partial\D$ as
\[
D_{\lambda}(f)=\frac{1}{2\pi}\int_{0}^{2\pi}\Big|\frac{f(e^{it})-f(\lambda)}{e^{it}-\lambda}\Big|^{2}dt.
\]
If $\mu$ is a positive finite Borel measure on  $\partial\D$, we have a representation of the norm of $f\in D(\mu)$ as a consequence of the following formula showed in \cite[Proposition 2.2]{RS}
\[
\int_{\partial\D}D_{\zeta}(f)d\mu(\zeta)=\int_{\D}|f'(z)|^{2}P_{\mu}(z)dA(z).
\]

Give a finite and positive Borel measure $\nu$ on $\D$, we  say that $\nu$ is a $\mu$-Carleson measure if there exists a constant $C$  independent of $f$ such that for all $f\in D(\mu)$ (\cite{GR1} and \cite{CFS})
\[
\int_{\D}|f(z)|^{2}d\nu(z)\leq C\|f\|_{D(\mu)}^{2}.
\]

Throughout the article, we will denote by $C$ a  positive constant that may differ from line to line. The notation  $F\lesssim G$ means that  there exists a constant $C>0$ such that $F\leq CG$ and $C$ is independent of the functions and variables in the inequality. The notation $F\approx G$  indicates that $F\lesssim G$ and also $G\lesssim F$.

\section{A double integral characterization of $D(\mu)$ spaces}
In this section, we characterize  the Dirichlet-type spaces $D(\mu)$ in terms of a double integral. Also, we give a characterization of $D(\mu)$ in terms of the mean oscillation in the Bergman metric. Similar characterizations in other spaces have been studied in \cite{BP}, \cite{WZ1} and \cite{WZ}.

Let $d(z,w)$ denote the Bergman metric between two points in $z,w\in\D$:
\[
d(z, w)=\log\frac{1+|\varphi_{z}(w)|}{1-|\varphi_{z}(w)|},\,\,\,\,\,\,\,z,\,w\in\D,
\]
where $\varphi_{z}(w)=\displaystyle\frac{z-w}{1-\bar{z}w}$. For $z\in\D$ and $R>0$, we denote by $B(z,R)=\{w\in\D: d(z,w)<R\}$   the Bergman ball at $z$ with radius $R$  and by $|B(z,R)|$ the area of $B(z,R)$. If $R>0$ is fixed, then it is well-known that $|B(z,R)|$ is comparable to $(1-|z|^{2})^{2}$ as $|z|\to 1^-$ (see, for example, Section 4.2 of \cite{Zhu1}). Given a function $f\in L^{2}(\D,dA)$, we define  the mean oscillation of $f$ as
\[
MOf(z)=\left(\int_{\D}|f\circ\varphi_{z}(w)-f(z)|^{2}dA(w)\right)^{\frac{1}{2}}.
\]
For $0<r<1$ fixed, let
\[
\hat{f_{r}}(z)=\frac{1}{|B(z,r)|}\int_{B(z,r)}f(w)dA(w)
\]
denote the average of $f$ over the  Bergman ball $B(z,r)$. The mean oscillation of $f$ at $z$ in the
Bergman metric is defined by
\[
MO_{r}f(z)=\left(\frac{1}{|B(z,r)|}\int_{B(z,r)}|f(w)-\hat{f_{r}}(z)|^{2}dA(w)\right)^{\frac{1}{2}}.
\]

\noindent It is easy to check that for $z\in\D$ we have
\begin{eqnarray*}
(MO_{r}f(z))^{2}&=&\widehat{|f|_{r}^{2}}(z)-|\hat{f_{r}}(z)|^2\nonumber\\
&=&\frac{1}{|B(z,r)|^{2}}\int_{B(z,r)}\int_{B(z,r)}|f(u)-f(v)|^{2}dA(u)dA(v).
\end{eqnarray*}

In order to show our first result, two  lemmas are needed. A proof of the following lemma can be found in \cite[Lemma 3.5]{AN} (see also \cite[Lemma 1]{ZR}).
\begin{lem}
Suppose that $\eta,\, \zeta,\,z \in\D$. Let $s > -1,\, r,\, t > 0$ and   $t < s + 2 < r$. Then
\[
\int_{\D}\frac{(1-|\eta|^{2})^{s}}{|1-\bar{\eta}z|^{r}|1-\bar{\eta}\zeta|^{t}}dA(\eta)\leq \frac{C}{(1-|z|^{2})^{r-s-2}|1-\bar{\zeta}z|^{t}}.
\]
\end{lem}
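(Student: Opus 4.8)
The plan is to prove this integral estimate by the standard technique of splitting the domain of integration according to which of the three factors in the denominator is ``dominant,'' and then reducing each piece to a known one-variable estimate of the form $\int_\D (1-|\eta|^2)^s|1-\bar\eta z|^{-c}\,dA(\eta)\approx (1-|z|^2)^{s+2-c}$ for $c>s+2$ (the classical Forelli--Rudin type estimate, valid here since $r>s+2$). First I would dispose of the trivial regime: if either $|z|$ or $|\zeta|$ is bounded away from $1$ (say $\le 1/2$), the whole integral is comparable to a constant and the claimed right-hand side is also comparable to a constant, so there is nothing to prove; hence I may assume $|z|,|\zeta|$ close to $1$. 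The key elementary fact I would invoke repeatedly is the ``triangle-type'' inequality $|1-\bar\eta\zeta|\lesssim |1-\bar\eta z|+|1-\bar z\zeta|$ and its symmetric variants, which let me compare the sizes of the three factors.

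Next I would split $\D=E_1\cup E_2$, where $E_1=\{\eta: |1-\bar\eta z|\ge |1-\bar z\zeta|\}$ and $E_2=\{\eta:|1-\bar\eta z|<|1-\bar z\zeta|\}$, the idea being that on $E_2$ the factor $|1-\bar\eta\zeta|$ is comparable to $|1-\bar z\zeta|$ (because $|1-\bar\eta\zeta|\ge |1-\bar z\zeta|-|1-\bar\eta z|$ needs a little more care, so more precisely one uses $|1-\bar z\zeta|\lesssim|1-\bar\eta z|+|1-\bar\eta\zeta|$, whence on $E_2$ one gets $|1-\bar\eta\zeta|\gtrsim |1-\bar z\zeta|$). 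On $E_2$ I can therefore pull $|1-\bar z\zeta|^{-t}$ out of the integral (up to a constant) and bound what remains by $\int_\D (1-|\eta|^2)^s|1-\bar\eta z|^{-r}\,dA(\eta)\lesssim (1-|z|^2)^{r-s-2}{}^{-1}$ — wait, more carefully, $\lesssim (1-|z|^2)^{s+2-r}=(1-|z|^2)^{-(r-s-2)}$, which is exactly the desired bound. On $E_1$, where $|1-\bar\eta z|\ge|1-\bar z\zeta|$, I would use $|1-\bar\eta\zeta|\gtrsim$ nothing directly; instead I replace $|1-\bar\eta\zeta|^{-t}$ by a comparison with $|1-\bar\eta z|$: since $|1-\bar\eta z|\lesssim |1-\bar\eta\zeta|+|1-\bar z\zeta|\le |1-\bar\eta\zeta|+|1-\bar\eta z|$ is unhelpful, I instead note that on $E_1$ we cannot control $|1-\bar\eta\zeta|$ from below, so I split $E_1$ further into $E_{1a}=\{|1-\bar\eta\zeta|\ge |1-\bar\eta z|/2\}$ and $E_{1b}$ its complement. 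On $E_{1a}$ I bound $|1-\bar\eta\zeta|^{-t}\lesssim |1-\bar\eta z|^{-t}$ and use the one-variable estimate $\int_\D (1-|\eta|^2)^s|1-\bar\eta z|^{-(r+t)}\,dA(\eta)\lesssim (1-|z|^2)^{s+2-r-t}$ (valid since $r+t>s+2$), which gives $(1-|z|^2)^{-(r-s-2)}(1-|z|^2)^{-t}$; then I use $(1-|z|^2)^{-t}\lesssim |1-\bar z\zeta|^{-t}$ (since $1-|z|^2\gtrsim$ ... hmm, actually $|1-\bar z\zeta|\ge 1-|z|$, so $(1-|z|^2)^{-t}\gtrsim |1-\bar z\zeta|^{-t}$, the wrong direction) — so this sub-split needs the reverse: on $E_{1b}$, where $|1-\bar\eta\zeta|<|1-\bar\eta z|/2$ combined with $|1-\bar\eta z|\ge |1-\bar z\zeta|$, the triangle inequality $|1-\bar\eta z|\lesssim|1-\bar\eta\zeta|+|1-\bar z\zeta|$ forces $|1-\bar\eta z|\lesssim|1-\bar z\zeta|$, hence $|1-\bar\eta z|\approx|1-\bar z\zeta|$, and I am effectively back in the $E_2$ situation with the roles bookkept; I would handle it by bounding $|1-\bar\eta z|^{-r}\approx|1-\bar z\zeta|^{-r}$, pulling it out, and integrating $(1-|\eta|^2)^s|1-\bar\eta\zeta|^{-t}$ over $\D$, which since $t<s+2$ converges to a finite constant, giving $|1-\bar z\zeta|^{-r}\lesssim |1-\bar z\zeta|^{-t}(1-|z|^2)^{-(r-s-2)}$ after using $|1-\bar z\zeta|\ge 1-|z|^2$ on the leftover power $r-t>0$... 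I would organize these cases cleanly so the exponent bookkeeping works out.

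In writing it up I would present it as: fix the constants, reduce to $|z|,|\zeta|\ge 1/2$, state the two one-variable integral estimates as consequences of Forelli--Rudin (or simply cite Lemma~4.2.2 of Zhu), state the triangle inequalities for $|1-\bar a b|$, then do the dyadic-type decomposition of $\D$ into the region where $|1-\bar\eta z|$ is comparable to $|1-\bar z\zeta|$ or larger versus smaller, and estimate each region. The main obstacle — and the part that requires genuine care rather than routine computation — is getting the exponent arithmetic to land exactly on $(1-|z|^2)^{-(r-s-2)}|1-\bar\zeta z|^{-t}$ in every region; the constraints $t<s+2<r$ are used precisely to make the two one-variable integrals converge at the right rate (the lower bound $t<s+2$ makes $\int(1-|\eta|^2)^s|1-\bar\eta\zeta|^{-t}$ finite, the upper bound $s+2<r$ makes $\int(1-|\eta|^2)^s|1-\bar\eta z|^{-r}$ blow up at precisely the rate $(1-|z|^2)^{s+2-r}$), and one must check that no region produces an extra negative power of $(1-|z|^2)$ or an unwanted power of $|1-\bar\zeta z|$. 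Since this is a known lemma with a reference to \cite[Lemma 3.5]{AN} given in the paper, I would in practice keep the proof short and lean heavily on that citation, reproducing only the decomposition argument in outline.
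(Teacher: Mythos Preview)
The paper does not prove this lemma at all: it is stated with the sentence ``A proof of the following lemma can be found in \cite[Lemma 3.5]{AN} (see also \cite[Lemma 1]{ZR}),'' and then used as a black box. So your closing instinct---to keep the write-up short and lean on the citation---is precisely what the authors do.

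Your sketched argument via domain decomposition and the one-variable Forelli--Rudin estimates is the standard way such two-point kernel bounds are proved, and it does go through, but the place where you stall (the region $E_{1a}$) has a one-line fix you overlooked. On $E_{1a}$ you have both $|1-\bar\eta\zeta|\ge \tfrac12|1-\bar\eta z|$ \emph{and} (from the ambient $E_1$ constraint) $|1-\bar\eta z|\ge |1-\bar z\zeta|$; chaining these gives $|1-\bar\eta\zeta|\gtrsim |1-\bar z\zeta|$ directly, so you can pull out $|1-\bar z\zeta|^{-t}$ exactly as you did on $E_2$ and finish with the single Forelli--Rudin integral in $|1-\bar\eta z|^{-r}$. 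There is no need to route through $|1-\bar\eta z|^{-t}$, which is what produced the wrong-direction inequality you flagged. With that correction your three regions $E_2$, $E_{1a}$, $E_{1b}$ each land cleanly on the target bound, and the hypotheses $t<s+2<r$ are used exactly where you say: $r>s+2$ for the blow-up rate of $\int(1-|\eta|^2)^s|1-\bar\eta z|^{-r}\,dA(\eta)$, and $t<s+2$ for the finiteness of $\int(1-|\eta|^2)^s|1-\bar\eta\zeta|^{-t}\,dA(\eta)$ on $E_{1b}$.
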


\begin{lem}\label{lemestPoiss}
Let $s>-2$ and  $p>s+3$. Then
\[
\int_{\D}\frac{(1-|w|^2)^{p-s-2}(1-|z|^2)^{s}}{|1-\bar{w}z|^{p}}P_{\mu}(z)dA(z)\leq C P_{\mu}(w).
\]
\end{lem}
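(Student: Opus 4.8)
The plan is to reduce the estimate to an application of Lemma 3.1 by first rewriting the Poisson integral $P_\mu(z)$ and then exchanging the order of integration. Recall that
\[
P_\mu(z)=\int_{\partial\D}\frac{1-|z|^2}{|\zeta-z|^2}\,d\mu(\zeta),
\]
so that, by Tonelli's theorem, the left-hand side equals
\[
\int_{\partial\D}\left(\int_{\D}\frac{(1-|w|^2)^{p-s-2}(1-|z|^2)^{s+1}}{|1-\bar{w}z|^{p}\,|\zeta-z|^{2}}\,dA(z)\right)d\mu(\zeta).
\]
The inner integral is exactly of the form handled by Lemma 3.1 with the roles of the variables identified as follows: integrate in $z$ (playing the role of $\eta$ there), take the exponent of $(1-|z|^2)$ to be $s'=s+1>-1$ (here we use the hypothesis $s>-2$), take $r'=p$ for the factor $|1-\bar w z|^{-p}$, and take $t'=2$ for the factor $|1-\bar\zeta z|^{-2}=|\zeta-z|^{-2}$ (valid since $|\zeta|=1$). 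The hypothesis $p>s+3$ is precisely $s'+2=s+3<p=r'$, and $t'=2<s'+2=s+3$ holds automatically; so all the hypotheses $t'<s'+2<r'$ of Lemma 3.1 are met.

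Applying Lemma 3.1 to the inner integral gives
\[
\int_{\D}\frac{(1-|z|^2)^{s+1}}{|1-\bar w z|^{p}\,|1-\bar\zeta z|^{2}}\,dA(z)\leq \frac{C}{(1-|w|^2)^{p-(s+1)-2}\,|1-\bar\zeta w|^{2}}=\frac{C}{(1-|w|^2)^{p-s-3}\,|1-\bar\zeta w|^{2}}.
\]
Multiplying by the remaining factor $(1-|w|^2)^{p-s-2}$ leaves
\[
\frac{C(1-|w|^2)^{p-s-2}}{(1-|w|^2)^{p-s-3}\,|1-\bar\zeta w|^{2}}=\frac{C(1-|w|^2)}{|1-\bar\zeta w|^{2}}=\frac{C(1-|w|^2)}{|\zeta-w|^{2}},
\]
again using $|\zeta|=1$. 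Integrating this against $d\mu(\zeta)$ over $\partial\D$ reproduces, up to the constant $C$, the Poisson integral $P_\mu(w)$, which is the desired conclusion.

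I do not expect any serious obstacle here; the argument is essentially a bookkeeping exercise in matching exponents to the hypotheses of Lemma 3.1 and invoking Tonelli to justify the interchange of integration (legitimate since the integrand is nonnegative). The one point that requires a little care is the correct translation of the exponents: one must absorb the factor $(1-|z|^2)$ coming from the Poisson kernel into the power $(1-|z|^2)^s$ to get the effective exponent $s+1$ before applying Lemma 3.1, and then check that the inequalities $-1<s+1$, $2<s+3$, and $s+3<p$ are exactly what the hypotheses $s>-2$ and $p>s+3$ guarantee. Once that is observed, the estimate follows immediately.
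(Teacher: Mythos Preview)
Your approach is the same as the paper's: expand $P_\mu$, swap the order of integration, and invoke Lemma~3.1. The paper adds one technical wrinkle you omit: since Lemma~3.1 is stated for $\zeta\in\D$, the paper first proves the inequality with $P_\mu$ replaced by the subharmonic approximant
\[
P_{\mu_r}(z)=\int_{\partial\D}\frac{r^2(1-|z|^2)}{|\zeta-rz|^2}\,d\mu(\zeta),
\]
so that the second point in Lemma~3.1 is $r\zeta\in\D$, and only afterwards lets $r\to1^-$ using Fatou's lemma. Your direct application with $\zeta\in\partial\D$ is morally fine (the constant in Lemma~3.1 is uniform in $\zeta$), but strictly speaking lies outside the lemma's stated hypotheses; the paper's detour through $P_{\mu_r}$ is what makes this rigorous.

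There is, however, a genuine slip in your exponent check. You assert that ``$t'=2<s'+2=s+3$ holds automatically,'' but this is false: with only $s>-2$ you get $s+3>1$, not $s+3>2$. For $-2<s\le -1$ the hypothesis $t<s+2$ of Lemma~3.1 fails, so the lemma cannot be invoked as written. (The paper's proof is silent on this verification and thus shares the gap; in every place Lemma~3.2 is actually used in the paper one has $s>-1$, so the issue is cosmetic for the applications, but your explicit claim of automaticity is incorrect.) Either restrict to $s>-1$, or supply a separate argument for the remaining range.
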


\begin{proof}

We first  show that
\[
\int_{\D}\frac{(1-|w|^2)^{p-s-2}(1-|z|^2)^{s}}{|1-\bar{w}z|^{p}}P_{\mu_{r}}(z)dA(z)\leq C P_{\mu_{r}}(w).
\]
From Lemma 3.1 and  Fubini's theorem, we have
\begin{eqnarray*}
&&\int_{\D}\frac{(1-|w|^2)^{p-s-2}(1-|z|^2)^{s}}{|1-\bar{w}z|^{p}}P_{\mu_{r}}(z)dA(z)\nonumber\\
&=&\int_{\D}\frac{(1-|w|^2)^{p-s-2}(1-|z|^2)^{s}}{|1-\bar{w}z|^{p}}\int_{\partial\D}
\frac{r^{2}(1-|z|^2)}{|\zeta-rz|^2}d\mu(\zeta)dA(z)\nonumber\\
&\leq&C\int_{\partial\D}\frac{r^{2}(1-|w|^2)}{|\zeta-rw|^2}d\mu(\zeta)=C P_{\mu_{r}}(W).
\end{eqnarray*}
Letting $r\rightarrow 1^{-}$ and using Fatou's Lemma, we get
\begin{eqnarray*}
&&\int_{\D}\frac{(1-|w|^2)^{p-s-2}(1-|z|^2)^{s}}{|1-\bar{w}z|^{p}}P_{\mu}(z)dA(z)\nonumber\\
&=&\int_{\D}\frac{(1-|w|^2)^{p-s-2}(1-|z|^2)^{s}}{|1-\bar{w}z|^{p}}{\underline{\lim}}_{r\rightarrow 1^{-}}P_{\mu_{r}}(z)dA(z)\nonumber\\
&\leq&\underline{\lim}_{r\rightarrow 1^{-}}\int_{\D}\frac{(1-|w|^2)^{p-s-2}(1-|z|^2)^{s}}{|1-\bar{w}z|^{p}}P_{\mu_{r}}(z)dA(z)\nonumber\\
&\leq&C{\underline{\lim}}_{r\rightarrow 1^{-}}P_{\mu_{r}}(w)=CP_{\mu}(w).
\end{eqnarray*}
This finishes the proof.
\end{proof}

We are ready to establish one of the main theorems of this section.

\begin{thm}\label{thmchar1}
Suppose $ \sigma,\, \tau > -1$.  Then $f\in D(\mu)$ if and only if
\[
\int_{\D}\int_{\D}\frac{|f(z)-f(w)|^2}{|1-\bar{z}w|^{4+\sigma+\tau}}
P_{\mu}(z)dA_{\sigma}(z)dA_{\tau}(w)<\infty.
\]
\end{thm}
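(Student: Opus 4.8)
The plan is to reduce the double integral to the standard Dirichlet-type expression $\int_\D |f'(z)|^2 P_\mu(z)\,dA(z)$ by controlling $|f(z)-f(w)|^2$ from above and below in terms of the derivative. For the necessity direction, write $f(z)-f(w) = \int_w^z f'(\xi)\,d\xi$ along a suitable path (e.g. the line segment, or better, a path adapted to the pseudohyperbolic geometry). Using Cauchy--Schwarz and the subharmonicity/growth estimates for $|f'|^2$, one expects a pointwise bound of the form $|f(z)-f(w)|^2 \lesssim |z-w|^2 \sup_{\xi}|f'(\xi)|^2$ over an appropriate region, which after averaging and invoking a sub-mean-value property converts into an integral of $|f'(\xi)|^2$ against a kernel. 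Then one integrates in $z$ and $w$, using Lemma~\ref{lemestPoiss} to absorb the Poisson-type factor: the role of that lemma is precisely to show that integrating the kernel $\frac{(1-|z|^2)^{p-s-2}(1-|w|^2)^{s}}{|1-\bar z w|^{p}}$ against $P_\mu(z)$ reproduces $P_\mu(w)$ up to a constant, so that after the dust settles one is left with $C\int_\D |f'(\xi)|^2 P_\mu(\xi)\,dA(\xi) = C\|f\|^2_{D(\mu)}$ (modulo the $H^2$-norm term, which is handled separately, e.g. by testing against $w=0$ or by a standard comparison).

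For the sufficiency direction, I would run the argument in the opposite direction: bound $|f'(z)|^2$ from below (in an averaged sense) by a double integral of $|f(u)-f(v)|^2$ over Bergman balls centered near $z$. Concretely, using the identity recorded just before the theorem,
\[
(MO_r f(z))^2 = \frac{1}{|B(z,r)|^2}\int_{B(z,r)}\int_{B(z,r)}|f(u)-f(v)|^2\,dA(u)\,dA(v),
\]
together with the well-known fact that for analytic $f$ one has $|f'(z)|^2(1-|z|^2)^2 \approx (MO_r f(z))^2$. Multiplying by $P_\mu(z)$, integrating over $\D$, and using Fubini plus the comparability $|B(z,r)|\approx (1-|z|^2)^2$ and $|1-\bar z w|\approx (1-|z|^2)\approx (1-|w|^2)$ for $w\in B(z,r)$, one recognizes the resulting expression as comparable to a piece of the double integral in the statement, hence finite. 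Thus $f\in D(\mu)$.

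The technical heart is the passage between $|f(z)-f(w)|^2$ and the weighted average of $|f'|^2$, handled cleanly if one first proves the ``local'' comparison $|f'(z)|^2(1-|z|^2)^2 \approx (MO_r f(z))^2$ and then globalizes. I expect the main obstacle to be the necessity estimate: controlling $|f(z)-f(w)|^2/|1-\bar z w|^{4+\sigma+\tau}$ uniformly when $z$ and $w$ are pseudohyperbolically far apart, where the naive segment bound is wasteful. The fix is to split the domain of integration into the ``near'' region $\{d(z,w)<R\}$, where the local comparison applies directly, and the ``far'' region $\{d(z,w)\ge R\}$, where one bounds $|f(z)-f(w)|^2 \lesssim |f(z)|^2 + |f(w)|^2$ and estimates each term using the sub-mean-value property of $|f|^2$ over Bergman balls together with Lemma~\ref{lemestPoiss} (with $s=\sigma$, $p = 4+\sigma+\tau$, noting $p>\sigma+3$ and $p - \sigma - 2 = 2+\tau > -1$ since $\tau>-1$), plus the $\mu$-Carleson-type control and the containment $D(\mu)\subset H^2$ for the $H^2$ piece. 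Choosing $R$ and the auxiliary exponents so that all integrability hypotheses of Lemmas~3.1 and \ref{lemestPoiss} are met is the bookkeeping one must get right, but no single step is deep.
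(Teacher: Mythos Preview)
Your sufficiency direction is essentially the paper's: restrict the $w$-integral to a Bergman ball $B(z,r)$, use $|1-\bar z w|\approx 1-|z|^2$ there, and invoke the sub-mean-value inequality for $|f'|^2$ to recover $\int_\D |f'(z)|^2 P_\mu(z)\,dA(z)$. That part is fine.

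The necessity direction, however, has a genuine gap. Your plan is to split into a near region $\{d(z,w)<R\}$ and a far region $\{d(z,w)\ge R\}$, and on the far region to use the crude bound $|f(z)-f(w)|^2\lesssim |f(z)|^2+|f(w)|^2$. This bound throws away the cancellation that makes the double integral finite, and the resulting expression diverges already for $f\equiv 1$. Take $\mu=m$ (so $P_\mu\equiv 1$), $\sigma=\tau=0$, and fix $z$. The substitution $w=\varphi_z(u)$ gives
\[
\int_{d(z,w)\ge R}\frac{dA(w)}{|1-\bar z w|^4}
=\int_{|u|\ge c_R}\frac{|1-\bar z u|^4}{(1-|z|^2)^4}\cdot\frac{(1-|z|^2)^2}{|1-\bar z u|^4}\,dA(u)
=\frac{C_R}{(1-|z|^2)^2},
\]
and $\int_\D (1-|z|^2)^{-2}\,dA(z)=\infty$. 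No appeal to $D(\mu)\subset H^2$, sub-mean-value, or Lemma~\ref{lemestPoiss} repairs this: the integral you are trying to bound is infinite for a function that is trivially in $D(\mu)$. The near-region/far-region dichotomy, which works well for characterizations where the kernel already carries a factor $|z-w|^2$ (as in some Besov or $Q_p$ settings), is the wrong tool here.

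The paper avoids the split entirely. It first reduces $\sigma\ne\tau$ to $\sigma=\tau$ via the pointwise kernel inequality
\[
\frac{(1-|z|^2)^\sigma(1-|w|^2)^\sigma}{|1-\bar z w|^{4+2\sigma}}
\le \frac{(1-|z|^2)^\tau(1-|w|^2)^\sigma}{|1-\bar z w|^{4+\sigma+\tau}}
\le \frac{(1-|z|^2)^\tau(1-|w|^2)^\tau}{|1-\bar z w|^{4+2\tau}}
\qquad(\sigma>\tau),
\]
a step your proposal omits. Then, for $\sigma=\tau$, it makes the M\"obius change $w\mapsto\varphi_z(w)$ in the inner integral, turning it into
$\int_\D |f\circ\varphi_z(w)-f\circ\varphi_z(0)|^2\,dA_\sigma(w)$,
which by the standard weighted-Bergman norm equivalence is $\approx \int_\D |(f\circ\varphi_z)'(w)|^2(1-|w|^2)^2\,dA_\sigma(w)$. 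Undoing the change of variables and applying Lemma~\ref{lemestPoiss} once gives the upper bound $C\int_\D|f'|^2 P_\mu\,dA$ directly, with no near/far decomposition needed. This is the missing idea in your argument.
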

\begin{proof}
Suppose first that $\sigma \neq \tau$. We may assume that $\sigma > \tau$. Then if
$z,\,w\in\D$, we have (\cite[p.109]{RW})
\begin{eqnarray*}
\frac{(1-|w|^2)^{\sigma}(1-|z|^2)^{\sigma}}{|1-\bar{z}w|^{4+2\sigma}}&\leq& \frac{(1-|w|^2)^{\sigma}(1-|z|^2)^{\tau}}{|1-\bar{z}w|^{4+\sigma+\tau}}\nonumber\\
&\leq& \frac{(1-|w|^2)^{\tau}(1-|z|^2)^{\tau}}{|1-\bar{z}w|^{2\tau+4}}.
\end{eqnarray*}
Consequently, the case $\sigma \neq \tau$ can be obtained from  the case $\sigma = \tau$.

In what follows, we may assume that $\sigma=\tau$.
It is well-known (\cite[Theorem 4.27]{Zhu2}) that for any $F\in H(\D)$
\begin{eqnarray}
\int_{\D}|F(w)-F(0)|^2dA_{\sigma}(w)\approx\int_{\D}|F'(w)|^2(1-|w|^2)^2A_{\sigma}(w).
\end{eqnarray}
From Lemma 3.2 and equation (3.1), we get
\begin{eqnarray}
I(f)&=&\int_{\D}\int_{\D}\frac{|f(z)-f(w)|^2}{|1-\bar{z}w|^{4+2\sigma}}P_{\mu}(z)dA_{\sigma}(z)
dA_{\sigma}(w)\nonumber\\
&=&\int_{\D}\int_{\D}|f(\varphi_{z}(w))-f(\varphi_{z}(0))|^2dA_{\sigma}(w)
\frac{P_{\mu}(z)}{(1-|z|^2)^{2+\sigma}}dA(z)\nonumber\\
&\approx&\int_{\D}\int_{\D}|\big(f(\varphi_{z}(w))\big)'|^2(1-|w|^2)^{2}dA_{\sigma}(w)
\frac{P_{\mu}(z)}{(1-|z|^2)^{2+\sigma}}dA_{\sigma}(z)\nonumber\\
&\approx&\int_{\D}\int_{\D}|f'(w)|^2
\frac{(1-|w|^2)^{\sigma+2}(1-|z|^2)^{\sigma}}{|1-\bar{z}w|^{4+2\sigma}}dA(w)
P_{\mu}(z)dA(z)\nonumber\\
&\leq&C\int_{\D}|f'(w)|^2P_{\mu}(w)dA(w).
\end{eqnarray}

Conversely, for any $f\in H(\D)$, we may apply the following estimates (cf. \cite[Charpter 4]{Zhu2})
\[
|f'(z)|^{2}\leq \frac{C}{(1-|z|^2)^{2+\sigma}}\int_{B(z,r)}|f'(w)|^2dA_{\sigma}(w).
\]
Since
\[
\frac{(1-|w|^2)^2}{|1-z\bar{w}|^{4+\sigma}}\approx\frac{1}{(1-|z|^2)^{2+\sigma}},\,\,\,\,w\in B(z,r).
\]
Using the estimate of $I(f)$ in (3.2) yields
\begin{eqnarray*}
I(f)&\geq&\int_{\D}\int_{B(z,r)}\Big|f'(w)\frac{1-|w|^2}{|1-\bar{z}w|^{2+\sigma}}\Big|^2dA_{\sigma}(w)
P_{\mu}(z)dA_{\sigma}(z)\nonumber\\
&\approx&\int_{\D}\frac{1}{(1-|z|^2)^{2+\sigma}}\int_{B(z,r)}|f'(w)|^2dA_{\sigma}(w)
P_{\mu}(z)dA(z)\nonumber\\
&\geq&\int_{\D}|f'(z)|^2P_{\mu}(z)dA(z).
\end{eqnarray*}
\end{proof}
Now, we give a characterization of $D(\mu)$ in terms of the mean oscillation in the Bergman metric.

\begin{thm}
Let $f\in A^{2}$ and $d\tau(z)=dA(z)/(1-|z|^{2})^2$ on $\D$. Then the following statements are equivalent:
\begin{itemize}
\item[(i)]$f\in D(\mu)$;
\item[(ii)]
$$
\int_{\D}\big(MOf(z)\big)^{2}P_{\mu}(z)d\tau(z)<\infty;
$$
\item[(iii)]
$$
\int_{\D}\big(MO_{r}f(z)\big)^{2}P_{\mu}(z)d\tau(z)<\infty,
$$
where $r$ is any fixed positive radius.
\end{itemize}
\end{thm}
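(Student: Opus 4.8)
The plan is to connect the mean-oscillation quantities to the double integral of Theorem \ref{thmchar1} with the choice $\sigma=\tau=0$, and then invoke that theorem. The key observation is the identity already recorded in the excerpt,
\[
(MO_{r}f(z))^{2}=\frac{1}{|B(z,r)|^{2}}\int_{B(z,r)}\int_{B(z,r)}|f(u)-f(v)|^{2}dA(u)dA(v),
\]
together with an analogous (Möbius-invariant) rewriting of $MOf(z)$. Since $f\circ\varphi_{z}(w)-f(z)=f(\varphi_z(w))-f(\varphi_z(0))$, a change of variables $w\mapsto\varphi_z(w)$ shows that $(MOf(z))^2$ equals, up to the Jacobian factor $(1-|z|^2)^2/|1-\bar z w|^4$, an average of $|f(\varphi_z(u))-f(\varphi_z(v))|^2$; concretely one gets $(MOf(z))^2\approx\int_\D\int_\D |f(\varphi_z(u))-f(\varphi_z(v))|^2\,dA(u)\,dA(v)$ plus lower-order terms, and after undoing the Möbius substitutions this is comparable to $\int_\D\int_\D \frac{|f(u)-f(v)|^2}{|1-\bar u v|^{4}}\,dA(u)\,dA(v)$ localized near $z$ in the Bergman metric. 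Thus both (ii) and (iii) are, after integrating against $P_\mu(z)\,d\tau(z)=P_\mu(z)\,dA(z)/(1-|z|^2)^2$, comparable to a weighted double integral of the type appearing in Theorem \ref{thmchar1}.

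I would organize the argument in two implications. For (i)$\Rightarrow$(ii) and (i)$\Rightarrow$(iii): bound $(MO_r f(z))^2$ and $(MOf(z))^2$ above using the subharmonicity-type estimate $|f(u)-f(v)|^2\lesssim |u-v|^2\sup_{B}|f'|^2$ on a Bergman ball, or more cleanly use $(MOf(z))^2\lesssim (1-|z|^2)^2\int_\D |f'\circ\varphi_z(w)|^2\,dA(w)$ via the $H(\D)$ estimate (3.1) with $\sigma=0$ (exactly as in the proof of Theorem \ref{thmchar1}), then change variables to get $(MOf(z))^2\lesssim \int_\D |f'(w)|^2\frac{(1-|w|^2)^2(1-|z|^2)^2}{|1-\bar z w|^4}\,dA(w)$; multiply by $P_\mu(z)/(1-|z|^2)^2$, integrate in $z$, apply Fubini and then Lemma \ref{lemestPoiss} (with $s=0$, $p=4$) to get the bound $\lesssim \int_\D |f'(w)|^2 P_\mu(w)\,dA(w)\le \|f\|_{D(\mu)}^2$. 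The same computation handles $MO_r f$ since $MO_r f(z)\lesssim MOf(z)$ pointwise (the ball average minimizes the $L^2$ deviation, and $B(z,r)\subset\D$). For the converse (ii)$\Rightarrow$(i) or (iii)$\Rightarrow$(i): use the lower bound, valid on a fixed Bergman ball $B(z,r)$, that $(MO_r f(z))^2\gtrsim (1-|z|^2)^2|f'(z)|^2$ — this is the standard fact that the mean oscillation over a fixed-radius Bergman ball dominates $(1-|z|^2)|f'(z)|$ for analytic $f$, obtained by writing $f(w)=f(z)+f'(z)(w-z)+\cdots$ and noting the linear term cannot be killed by a constant. Then $\int_\D (MO_r f(z))^2 P_\mu(z)\,d\tau(z)\gtrsim \int_\D |f'(z)|^2 P_\mu(z)\,dA(z)$, and adding in control of $\|f\|_{H^2}$ (which follows since $f\in A^2$ and, once $f'\in L^2(P_\mu\,dA)$, Richter's theorem gives $f\in H^2$) yields $f\in D(\mu)$. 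The implication (ii)$\Rightarrow$(iii) and its reverse can either be routed through (i) or proved directly by comparing $MOf$ and $MO_rf$; going through (i) is cleanest.

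The main obstacle will be the lower estimate in the converse direction: one must be careful that the constant in $(MO_r f(z))^2\gtrsim (1-|z|^2)^2|f'(z)|^2$ is uniform in $z\in\D$, which requires transferring to the disk via $\varphi_z$ so that the Bergman ball becomes a fixed Euclidean disk $D(0,\rho)$ with $\rho=\rho(r)$ and then applying a compactness/normal-families argument on that fixed disk (no nonzero analytic function on $D(0,\rho)$ has all its nonconstant Taylor coefficients annihilated, and the relevant functional is continuous). A secondary technical point is justifying the Fubini interchanges and the application of Lemma \ref{lemestPoiss} with the borderline exponent $p=4$, $s=0$, which satisfies the hypothesis $p>s+3$; and verifying that the "lower-order terms" arising from expanding $|f\circ\varphi_z(w)-f(z)|^2$ versus the genuinely double-integral expression are absorbed — this is handled exactly as the averaging identity preceding the theorem, so no new difficulty arises there.
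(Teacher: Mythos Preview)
Your proposal is correct and follows essentially the same route as the paper: the pointwise inequalities $MO_{r}f(z)\le MOf(z)$ and $(1-|z|^2)|f'(z)|\lesssim MO_{r}f(z)$ give (ii)$\Rightarrow$(iii)$\Rightarrow$(i), while (i)$\Leftrightarrow$(ii) goes through the double-integral characterization of Theorem~\ref{thmchar1} with $\sigma=\tau=0$. The paper is slightly more direct for (i)$\Leftrightarrow$(ii): it records the exact identity $2\pi(MOf(z))^{2}=\int_{\D}|f(w)-f(z)|^{2}(1-|z|^{2})^{2}|1-\bar z w|^{-4}\,dA(w)$, which immediately shows $\int_{\D}(MOf(z))^{2}P_{\mu}(z)\,d\tau(z)$ \emph{equals} (up to a constant) the double integral of Theorem~\ref{thmchar1}, so one may simply cite that theorem rather than re-deriving the bounds via (3.1) and Lemma~\ref{lemestPoiss} as you propose; your initial attempt to write $(MOf(z))^{2}$ as a genuine double integral in $u,v$ is an unnecessary detour.
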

\begin{proof}$(i)\Rightarrow(ii)$. For $f\in A^{2}$, from  \cite[Section 7.1]{Zhu1}, we have
\begin{align}
2\pi\big(MOf(z)\big)^{2}=&\int_{\D}
|f(w)-f(z)|^{2}\frac{(1-|z|^{2})^2}{|1-\bar{z}w|^{4}}dA(w).\nonumber
\end{align}
Thus,
\[
\int_{\D}\int_{\D}\frac{|f(z)-f(w)|^2}{|1-\bar{z}w|^{4}}P_{\mu}(z)dA(z)dA(w)\approx
\int_{\D}\big(MOf(z)\big)^{2}P_{\mu}(z)d\tau(z).
\] and the proof follows form Theorem \ref{thmchar1}.

$(ii)\Rightarrow(iii)$. The proof follows from the fact that (\cite[Theorem 7.1.6]{Zhu1})
\[
MO_{r}f(z)\leq MOf(z).
\]

$(iii)\Rightarrow(i)$. Since (see \cite[p.35]{Xiao} or \cite[p.292]{WZ1})
\[
(1-|z|^{2})|f'(z)|\leq MO_{r}f(z),
\]
then
\begin{eqnarray*}
\int_{\D}|f'(z)|^2P_{\mu}(z)dA(z)
&=&\int_{\D}(1-|z|^{2})^{2}|f'(z)|^{2}P_{\mu}(z)d\tau(z)\nonumber\\
&\leq&\int_{\D}\big(MO_{r}f(z)\big)^{2}d\tau(z).
\end{eqnarray*}
This finishes the proof.
\end{proof}

\section{Higher order derivatives characterization of $D(\mu)$ spaces}
In this section, we show a further characterization of $D(\mu)$ spaces. This time in terms of higher order derivatives. For this, we will need to show the boundedness of certain integral operator by making use of Schur's test. We include it here for the sake of completeness.

Let $(X,\mu)$ be a measure space. For $f\in L^{p}(d\mu)$, we define the integral operator
\[
Tf(x)=\int_{X}H(x,y)f(y)d\mu(y),
\]
where $H$ is a nonnegative and measurable function on $X\times X$.

\begin{lem}(\cite[Corollary 3.2.3]{Zhu1}\label{lemSchur2})
Assume $\mu$ is a $\sigma$-finte measure. If there  exists a positive and measurable function $h$
on $X$ and a positive constant $C>0$ such that
\[
\int_{X}H(x,y)h(y)d\mu(y)\leq Ch(x)
\]
for almost all $x\in X$ and
\[
\int_{X}H(x,y)h(x)d\mu(x)\leq Ch(y)
\]
for almost all $y\in X$, then the integral operator $T$ is bounded on $L^2(X, d\mu)$. Furthermore, the norm of $T$ on $L^2(X, d\mu)$ does not exceed the constant $C$.
\end{lem}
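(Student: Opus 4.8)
The plan is to prove the lemma by the classical Schur test argument, whose entire content is a single application of the Cauchy--Schwarz inequality in which the weight $h$ is first inserted into the kernel and then, after an interchange of the order of integration, removed again.

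First I would fix $f\in L^{2}(X,d\mu)$ and note that it suffices to treat $f\ge 0$, since replacing $f$ by $|f|$ only enlarges $|Tf|$ pointwise; once the estimate is known for $|f|$ it transfers to $f$ and in particular shows $Tf$ is finite $\mu$-a.e.\ and measurable. For $f\ge 0$ I would split the kernel as
\[
H(x,y)f(y)=\bigl(H(x,y)^{1/2}h(y)^{1/2}\bigr)\cdot\bigl(H(x,y)^{1/2}h(y)^{-1/2}f(y)\bigr),
\]
which is legitimate because $h>0$ and $H\ge 0$, and apply Cauchy--Schwarz in the variable $y$ to get
\[
|Tf(x)|^{2}\le\Bigl(\int_{X}H(x,y)h(y)\,d\mu(y)\Bigr)\Bigl(\int_{X}H(x,y)h(y)^{-1}f(y)^{2}\,d\mu(y)\Bigr).
\]
By the first hypothesis the first factor is at most $Ch(x)$, so
\[
|Tf(x)|^{2}\le Ch(x)\int_{X}H(x,y)h(y)^{-1}f(y)^{2}\,d\mu(y).
\]

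Next I would integrate this inequality in $x$ over $X$. Since every integrand is nonnegative and $\mu$ is $\sigma$-finite, Tonelli's theorem justifies interchanging the order of integration, giving
\[
\|Tf\|_{L^{2}}^{2}\le C\int_{X}h(y)^{-1}f(y)^{2}\Bigl(\int_{X}H(x,y)h(x)\,d\mu(x)\Bigr)d\mu(y).
\]
By the second hypothesis the inner integral is at most $Ch(y)$, so the factors $h(y)^{-1}$ and $h(y)$ cancel and I obtain $\|Tf\|_{L^{2}}^{2}\le C^{2}\|f\|_{L^{2}}^{2}$, i.e.\ $\|Tf\|_{L^{2}}\le C\|f\|_{L^{2}}$. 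This proves both that $T$ is bounded on $L^{2}(X,d\mu)$ and that its norm does not exceed $C$.

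The only genuinely delicate point is the measure-theoretic bookkeeping needed to make the manipulations rigorous: one must know beforehand that the double integral $\int_{X}\int_{X}H(x,y)h(y)^{-1}f(y)^{2}\,d\mu(x)\,d\mu(y)$ is a well-defined element of $[0,\infty]$, which is exactly where $\sigma$-finiteness of $\mu$ enters, and one must keep track of the (measure-zero) exceptional set on which the two hypotheses may fail, as well as the reduction to $f\ge 0$ needed to guarantee measurability of $Tf$ in the general case. None of this changes the final constant; everything beyond it is the routine insertion and removal of the Schur weight $h$.
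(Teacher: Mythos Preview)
Your proof is correct and is precisely the classical Schur test argument. The paper does not supply its own proof of this lemma; it is stated with a citation to \cite[Corollary 3.2.3]{Zhu1}, and the proof given there is exactly the Cauchy--Schwarz-with-weight-$h$ followed by Tonelli argument you have written out.
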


\begin{lem}(\cite[Lemma 4.2.2]{Zhu1}\label{lemest})
Suppose $t>-1$. If $s>0$, then there exists a constant $C$ such that
\[
\int_{\D}\frac{(1-|w|^{2})^{t}}{|1-z\bar{w}|^{2+s+t}}dA(w)\leq \frac{C}{(1-|z|^{2})^{s}}
\]
for all $z\in\D$. If $s<0$, then there exists a constant $C$ such that
\[
\int_{\D}\frac{(1-|w|^{2})^{t}}{|1-z\bar{w}|^{2+s+t}}dA(w)\leq C
\]
for all $z\in\D$.
\end{lem}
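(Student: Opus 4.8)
The plan is to reduce this Forelli--Rudin type estimate to the asymptotic behaviour of one explicit power series in $|z|^{2}$. Set $c:=2+s+t$. If $c\le 0$ the integrand is bounded (since $|1-z\bar w|\le 2$) and the conclusion is immediate from $t>-1$, so assume $c>0$; note this is automatic when $s>0$, where $c>1$. Expand, via the binomial series $(1-x)^{-c/2}=\sum_{n\ge 0}a_{n}x^{n}$ with $a_{n}=\Gamma(n+c/2)/(n!\,\Gamma(c/2))\ge 0$,
\[
\frac{1}{|1-z\bar w|^{c}}=\frac{1}{(1-z\bar w)^{c/2}}\cdot\frac{1}{(1-\bar z w)^{c/2}}=\sum_{j,k\ge 0}a_{j}a_{k}\,z^{j}\bar w^{j}\,\bar z^{k}w^{k}.
\]
Multiplying by $(1-|w|^{2})^{t}$ and integrating in polar coordinates, orthogonality of the monomials on each circle kills every cross term with $j\ne k$, leaving
\[
\int_{\D}\frac{(1-|w|^{2})^{t}}{|1-z\bar w|^{c}}\,dA(w)=\sum_{n\ge 0}a_{n}^{2}\,|z|^{2n}\int_{\D}|w|^{2n}(1-|w|^{2})^{t}\,dA(w)=C\sum_{n\ge 0}\frac{\Gamma(n+c/2)^{2}}{\Gamma(n+1)\,\Gamma(n+t+2)}\,|z|^{2n},
\]
where the Beta-type integral $\int_{\D}|w|^{2n}(1-|w|^{2})^{t}\,dA(w)$ is finite precisely because $t>-1$ and $C=C(s,t)$.

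Next I would apply Stirling's formula in the quantitative form $\Gamma(n+a)/\Gamma(n+b)\approx(n+1)^{a-b}$, with comparison constants uniform in $n\ge 0$, to obtain
\[
\frac{\Gamma(n+c/2)^{2}}{\Gamma(n+1)\,\Gamma(n+t+2)}\approx(n+1)^{c/2+c/2-1-(t+2)}=(n+1)^{s-1}.
\]
Hence the integral is comparable, uniformly in $z\in\D$, to $\sum_{n\ge 0}(n+1)^{s-1}|z|^{2n}$. The proof then finishes with the elementary one-variable fact that $\sum_{n\ge 0}(n+1)^{s-1}x^{n}\approx(1-x)^{-s}$ for $x\in[0,1)$ when $s>0$, while $\sum_{n\ge 0}(n+1)^{s-1}x^{n}\le\sum_{n\ge 0}(n+1)^{s-1}<\infty$ when $s<0$; evaluating at $x=|z|^{2}$ yields both asserted bounds.

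I expect the step needing the most care to be making the term-by-term asymptotics genuinely uniform: one must bound the ratio of the exact coefficient to $(n+1)^{s-1}$ above and below by absolute constants for \emph{all} $n\ge 0$, not merely as $n\to\infty$, which means disposing of the first few indices by hand and keeping track of the finite but $s,t$-dependent factors $\Gamma(c/2)$ and $\Gamma(t+2)$; note also that the convergence of $\sum_{n\ge 0}(n+1)^{s-1}$ in the second case uses exactly the strict inequality $s<0$ (equivalently $s-1<-1$). If one prefers to avoid special functions, there is the familiar geometric alternative: split $\D$ into the Carleson-type region $\{w:|1-z\bar w|\le 1-|z|\}$ near $z/|z|$ and the dyadic shells $\{2^{k-1}(1-|z|)<|1-z\bar w|\le 2^{k}(1-|z|)\}$, $k\ge 1$, estimate $1-|w|^{2}$ and the area of each piece, and sum the resulting geometric-type series; this gives the same two bounds but is heavier on bookkeeping and more delicate when $t\in(-1,0)$. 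I would present the power-series argument as the main proof.
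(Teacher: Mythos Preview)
The paper does not supply its own proof of this lemma; it is quoted verbatim from Zhu's book with the citation \cite[Lemma 4.2.2]{Zhu1} and used as a black box. Your argument is correct and is in fact the standard Forelli--Rudin computation that appears in Zhu: expand $(1-z\bar w)^{-c/2}$ as a binomial series, exploit orthogonality of the monomials to reduce the double sum to a diagonal, evaluate the radial Beta integral, and then read off the growth of the resulting coefficients $\approx (n+1)^{s-1}$ via Stirling. So your proposal matches the source the paper defers to, and there is nothing to compare against within the paper itself.
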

Given $\mu$ a finite Borel positive measure on $\partial\D$, define the measure $\nu$ on $\D$ as
\[
d\nu(z)=P_{\mu}(z)dA(z)
\]
and the integral operator
\begin{equation}\label{eqopT}
Tf(z)=\int_{\D}H(z,w)f(w)d\nu(z),\,\,\,\,\,\,f\in L^{2}(d\nu),
\end{equation}
where
\[
H(z,w)=\frac{(1-|z|^2)^n(1-|w|^2)^{\alpha}}{|1-z\bar{w}|^{2+n+\alpha}P_{\mu}(w)}
\]
is a positive integral kernel and $\alpha$ is a sufficiently large constant.

Also,  consider  integral operator $S$ defined as
\begin{equation}\label{eqopS}
Sf(z)=\int_{\D}L(z,w)f(w)d\nu(w),\,\,\,\,\,\,f\in L^{2}(d\nu),
\end{equation}
where
\[
L(z,w)=\frac{(1-|w|^2)^{\alpha}}{|1-z\bar{w}|^{2+\alpha}P_{\mu}(w)}.\eqno(4.3)
\]
Again, $\alpha$ is a sufficiently large constant.

Our goal in this section is to show that the operators $T$ and $S$ are bounded on $L^2(\D,d\nu)$. As a consequence, we will give the announced characterization of $D(\mu)$ in terms of higher order derivatives.

\begin{thm}\label{thmopT}
The operator $T$ defined in \eqref{eqopT} is bounded on $L^2(\D,d\nu)$ for $\alpha$ sufficiently large.
\end{thm}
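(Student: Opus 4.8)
The plan is to apply Schur's test (Lemma \ref{lemSchur2}) with test function $h(z) = (1-|z|^2)^{-b}$ for a suitable exponent $b>0$ to be chosen, working on the measure space $(\D, d\nu)$ where $d\nu(z) = P_\mu(z)\,dA(z)$. Recall that the kernel of $T$ is
\[
H(z,w) = \frac{(1-|z|^2)^n (1-|w|^2)^\alpha}{|1-z\bar w|^{2+n+\alpha} P_\mu(w)},
\]
so that the two Schur integrals become
\[
\int_\D H(z,w) h(w)\, d\nu(w) = \int_\D \frac{(1-|z|^2)^n (1-|w|^2)^{\alpha-b}}{|1-z\bar w|^{2+n+\alpha}}\, dA(w)
\]
(the factor $P_\mu(w)$ in the denominator cancels against $d\nu(w) = P_\mu(w)\,dA(w)$), and
\[
\int_\D H(z,w) h(z)\, d\nu(z) = \int_\D \frac{(1-|z|^2)^{n-b}(1-|w|^2)^\alpha}{|1-z\bar w|^{2+n+\alpha} P_\mu(w)}\, P_\mu(z)\, dA(z).
\]

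First I would handle the first integral: it contains no measure $\mu$ at all, so it is a pure Forelli–Rudin type estimate. Writing $t = \alpha - b$ and $s = b - n$ in the notation of Lemma \ref{lemest}, we need $t > -1$, i.e. $\alpha > b - 1$, which holds for $\alpha$ large; then Lemma \ref{lemest} gives the bound $C(1-|z|^2)^{-(b-n)}$ when $b > n$, so the whole expression is $\leq C (1-|z|^2)^{n} (1-|z|^2)^{-(b-n)} = C(1-|z|^2)^{2n-b}$. To match $h(z) = (1-|z|^2)^{-b}$ we would want $2n - b = -b$, which fails; so instead I would be slightly more careful and choose $h(z) = (1-|z|^2)^{-b}$ with the second integral driving the choice of $b$, and absorb the discrepancy. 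Actually the clean route is: in the first integral use exponents so that Lemma \ref{lemest} yields exactly $(1-|z|^2)^{-b-n}$, multiply by $(1-|z|^2)^n$ to get $(1-|z|^2)^{-b} = h(z)$; this forces the pairing $2+n+\alpha = 2 + (b+n) + (\alpha - b)$, which is an identity, so the only real constraints are $\alpha - b > -1$ and $b + n > 0$, both satisfied for large $\alpha$ and any fixed $b > 0$.

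The second integral is the genuine obstacle, since it involves $P_\mu(z)/P_\mu(w)$ and cannot be reduced to a measure-free estimate. Here I would invoke Lemma \ref{lemestPoiss}: with its parameters set to $p = 2+n+\alpha$ and $s = n - b$, the hypotheses $s > -2$ and $p > s+3$ read $n - b > -2$ and $2 + n + \alpha > n - b + 3$, i.e. $b < n+2$ and $\alpha > 1 - b$, both of which hold for, say, $0 < b < n+2$ and $\alpha$ large. Lemma \ref{lemestPoiss} then gives
\[
\int_\D \frac{(1-|w|^2)^{(2+n+\alpha) - (n-b) - 2}(1-|z|^2)^{n-b}}{|1-z\bar w|^{2+n+\alpha}} P_\mu(z)\, dA(z) \leq C\, P_\mu(w),
\]
and $(2+n+\alpha)-(n-b)-2 = \alpha + b$, which is not quite the exponent $\alpha$ appearing in our integrand. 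To reconcile this I would simply pick $b$ and re-examine: our integrand has $(1-|w|^2)^\alpha/P_\mu(w)$, so after applying the lemma (which wants $(1-|w|^2)^{\alpha+b}$ in the numerator, i.e. we are missing a factor $(1-|w|^2)^{-b}$) we obtain a bound $\leq C (1-|w|^2)^{-b} P_\mu(w)/P_\mu(w) = C(1-|w|^2)^{-b} = C\,h(w)$, exactly as Schur's test requires — provided we relabel so the lemma is applied with its "$s$" equal to $n-b$ and its "$p-s-2$" matching $\alpha$, forcing $p = \alpha + (n-b) + 2 = 2 + n + \alpha - b$; this is a different $p$ than the kernel exponent $2+n+\alpha$, and closing that gap is precisely where one must be careful. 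The resolution is that $|1-z\bar w|$ is bounded below away from the diagonal only comparably, so one cannot change the exponent for free; instead, the correct bookkeeping is to keep $p = 2+n+\alpha$ throughout and simply accept the factor $(1-|w|^2)^{\alpha+b}$ demanded by Lemma \ref{lemestPoiss}, meaning we should define $h$ by $h(w) = (1-|w|^2)^{-\beta}$ and balance the two integrals simultaneously: the first needs the $(1-|z|^2)$ power to come out as $(1-|z|^2)^{n - ( \alpha - \beta + 2 - (2+n+\alpha))} $ and the second needs it to come out as $(1-|w|^2)^{-\beta}$; solving the two linear conditions in $\beta$ pins down the unique workable exponent, and one checks it is positive. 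Once both Schur integrals are bounded by $C\,h(\cdot)$ with a single constant $C$, Lemma \ref{lemSchur2} yields boundedness of $T$ on $L^2(\D,d\nu)$, completing the proof. I expect the main difficulty to be nothing conceptual but rather getting this exponent bookkeeping exactly right so that the same test function $h$ works in both Schur inequalities, and verifying that "$\alpha$ sufficiently large" indeed makes all the Forelli–Rudin and Lemma \ref{lemestPoiss} hypotheses hold at once.
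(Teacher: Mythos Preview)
Your approach is exactly the paper's---Schur's test (Lemma~\ref{lemSchur2}) with a power test function, handling the first Schur integral by Lemma~\ref{lemest} and the second by Lemma~\ref{lemestPoiss}---but an arithmetic slip creates a phantom obstruction that derails the write-up. In the first integral, matching $2+s+t=2+n+\alpha$ with $t=\alpha-b$ gives $s=n+b$, not $s=b-n$ as you wrote. With the correct value, Lemma~\ref{lemest} yields the bound $C(1-|z|^2)^{-(n+b)}$, and after multiplying by $(1-|z|^2)^{n}$ you get exactly $C(1-|z|^2)^{-b}=C\,h(z)$, with no discrepancy to absorb.

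For the second integral you already had the right answer before you second-guessed it: factor out $(1-|w|^2)^{-b}$ so the remaining $(1-|w|^2)$-exponent becomes $\alpha+b$, then apply Lemma~\ref{lemestPoiss} with $p=2+n+\alpha$ and its~``$s$'' equal to $n-b$ (so that $p-s-2=\alpha+b$ on the nose), obtaining the bound $C(1-|w|^2)^{-b}P_\mu(w)/P_\mu(w)=C\,h(w)$. There is no need to change~$p$ or to solve simultaneous linear conditions in a new parameter~$\beta$; the single test function $h(z)=(1-|z|^2)^{-b}$ with any $0<b<n+2$ closes both Schur integrals once $\alpha>\max(b-1,\,1-b)$. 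The paper's proof is the same computation with the sign convention $h(z)=(1-|z|^2)^{\sigma}$ and the constraints $\sigma<n$, $\alpha>\sigma+1$, $\alpha+\sigma>-1$; your $b$ is their $-\sigma$.
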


\begin{proof}
Fix  constants $\sigma$ and $\alpha$ such that
\[
\sigma<n,\quad\alpha>\sigma+1,\quad\alpha+\sigma>-1.
\]
We will apply Lemma 4.1 for the test function
\[
h(z)=(1-|z|^2)^{\sigma},\qquad z\in\D.
\]
Since $\alpha+\sigma>-1$ and $n-\sigma>0$, we may apply Lemma \ref{lemest} to conclude that there exists a constant $C>0$, such that
\begin{eqnarray*}
\int_{\D}H(z,w)h(w)d\nu(w)&=&(1-|z|^2)^{n}\int_{\D} \frac{(1-|w|^2)^{\alpha+\sigma}}{|1-z\bar{w}|^{2+(\alpha+\sigma)+(n-\sigma)}}dA(w)\nonumber\\
&\leq& Ch(z)
\end{eqnarray*}
for all $z\in\D$.

Next, for any $w\in\D$, applying Lemma \ref{lemestPoiss}, we get
\begin{eqnarray*}
\int_{\D}H(z,w)h(z)d\nu(z)&=&\frac{(1-|w|^2)^{\sigma}}{P_{\mu}(w)}\int_{\D}
\frac{(1-|w|^2)^{\alpha-\sigma}(1-|z|^2)^{n+\sigma}P_{\mu}(z)}{|1-z\bar{w}|^{2+n+\alpha}}dA(z)\nonumber\\
&\leq&Ch(w).
\end{eqnarray*}
And as a consequence of Lemma \ref{lemSchur2}, the proof of the theorem is now complete.
\end{proof}

\begin{thm}\label{thmopS}
 The operator $S$ defined in \eqref{eqopS} is bounded on $L^2(\D,d\nu)$ for $\alpha$ sufficiently large.
\end{thm}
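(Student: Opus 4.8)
The plan is to mimic the proof of Theorem \ref{thmopT}, again invoking Schur's test (Lemma \ref{lemSchur2}) with a power test function $h(z)=(1-|z|^2)^{\sigma}$, but now checking the two integrability conditions for the kernel $L(z,w)$ from \eqref{eqopS} and \eqref{4.3}. The key simplification over the case of $T$ is that $L(z,w)$ carries no factor $(1-|z|^2)^n$, so the exponent bookkeeping is lighter: we essentially need $\alpha$ large enough that all the exponents land in the ranges required by Lemma \ref{lemest} and Lemma \ref{lemestPoiss}. Concretely, I would fix $\sigma$ and $\alpha$ subject to $\sigma<0$ (or more precisely $-\sigma$ in the admissible range below), $\alpha>\sigma+1$, and $\alpha+\sigma>-1$, adjusting the precise inequalities once the two computations below are written out; the phrase ``$\alpha$ sufficiently large'' in the statement gives us exactly this freedom.

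First I would estimate $\int_\D L(z,w)h(w)\,d\nu(w)$. Expanding the definitions, $d\nu(w)=P_\mu(w)\,dA(w)$ cancels the $P_\mu(w)$ in the denominator of $L$, leaving
\[
\int_\D L(z,w)h(w)\,d\nu(w)=\int_\D\frac{(1-|w|^2)^{\alpha+\sigma}}{|1-z\bar w|^{2+\alpha}}\,dA(w).
\]
Writing the exponent of the kernel as $2+\alpha=2+s+t$ with $t=\alpha+\sigma$ and $s=-\sigma$, Lemma \ref{lemest} applies once $t>-1$, i.e. $\alpha+\sigma>-1$; if $\sigma<0$ then $s=-\sigma>0$ and the bound is $C(1-|z|^2)^{\sigma}=Ch(z)$, which is exactly what Schur's test demands. (If instead one wants $\sigma\ge 0$ one uses the second alternative of Lemma \ref{lemest}, but the choice $\sigma<0$ keeps the argument uniform with the second estimate.)

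Next I would handle the transposed condition $\int_\D L(z,w)h(z)\,d\nu(z)$. Here the $P_\mu$ in the denominator of $L$ does \emph{not} cancel, so we get
\[
\int_\D L(z,w)h(z)\,d\nu(z)=\frac{(1-|w|^2)^{\alpha}}{P_\mu(w)}\int_\D\frac{(1-|z|^2)^{\sigma}}{|1-z\bar w|^{2+\alpha}}\,P_\mu(z)\,dA(z),
\]
and this is precisely the shape controlled by Lemma \ref{lemestPoiss}: take $p=2+\alpha$ and identify the exponent $\sigma$ of $(1-|z|^2)$ with the ``$s$'' there and $\alpha$ with ``$p-s-2$'', which is consistent since $p-s-2=\alpha-\sigma$... so one needs $\alpha=\alpha-\sigma$, i.e. a mismatch — the resolution is to absorb the extra power $(1-|w|^2)^{\sigma}$ by writing $(1-|w|^2)^{\alpha}=(1-|w|^2)^{\alpha-\sigma}(1-|w|^2)^{\sigma}$ and applying Lemma \ref{lemestPoiss} with $s=\sigma$, $p-s-2=\alpha-\sigma$, which requires $\sigma>-2$ and $p=\alpha+2>\sigma+3$, i.e. $\alpha>\sigma+1$. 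The lemma then yields the bound $C(1-|w|^2)^{\sigma}P_\mu(w)/P_\mu(w)=C(1-|w|^2)^{\sigma}=Ch(w)$. With both conditions verified, Lemma \ref{lemSchur2} gives boundedness of $S$ on $L^2(\D,d\nu)$.

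The main obstacle is purely the juggling of exponents: one must choose $\sigma$ and $\alpha$ simultaneously so that the first estimate wants $\alpha+\sigma>-1$ and $\sigma<0$ while the second wants $\sigma>-2$ and $\alpha>\sigma+1$; these are jointly satisfiable (e.g. $\sigma=-1$ and $\alpha$ large), which is why the hypothesis only asks for $\alpha$ sufficiently large. No genuinely new idea beyond Theorem \ref{thmopT} is needed — the content is in confirming that the $P_\mu$ weight, which survives in the transposed integral, is exactly absorbed by Lemma \ref{lemestPoiss} rather than by the elementary Lemma \ref{lemest}.
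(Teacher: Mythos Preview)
Your proposal is correct and follows essentially the same route as the paper: the paper also applies Schur's test with the power test function $h(z)=(1-|z|^2)^{-\epsilon}$ for $0<\epsilon<1$ (i.e.\ your $\sigma=-\epsilon$), handling the first integral by Lemma~\ref{lemest} and the transposed one by Lemma~\ref{lemestPoiss} after the same factorization $(1-|w|^2)^{\alpha}=(1-|w|^2)^{\alpha+\epsilon}(1-|w|^2)^{-\epsilon}$. Your exponent constraints $-2<\sigma<0$, $\alpha+\sigma>-1$, $\alpha>\sigma+1$ are exactly those implicit in the paper's choice.
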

\begin{proof}
For $0<\epsilon<1$ and $\alpha>-\epsilon+1$, we consider the function
\[
h(z)=(1-|z|^2)^{-\epsilon},\qquad z\in\D.
\]
Again, we will apply Lemma \ref{lemSchur2} to show the boundedness of $S$ on $L^2(\D,d\nu)$.

First, for any $z\in\D$ , from Lemma \ref{lemest}, we have
\begin{eqnarray*}
\int_{\D}L(z,w)h(w)d\nu(w)&=&\int_{\D} \frac{(1-|w|^2)^{\alpha-\epsilon}}{|1-z\bar{w}|^{2+(\alpha-\epsilon)+\epsilon}}dA(w)\nonumber\\
&\leq& Ch(z).\
\end{eqnarray*}

Next, for any $w\in\D$, using Lemma \ref{lemestPoiss} again, we obtain
\begin{eqnarray*}
\int_{\D}L(z,w)h(z)d\nu(z)&=&\frac{(1-|w|^2)^{-\epsilon}}{P_{\mu}(w)}\int_{\D}
\frac{(1-|w|^2)^{\alpha+\epsilon}(1-|z|^2)^{-\epsilon}P_{\mu}(z)}{|1-z\bar{w}|^{2+\alpha}}dA(z)\nonumber\\
&\leq&Ch(w).
\end{eqnarray*}
Hence, the boundedness of $S$ on $L^2(\D,d\nu)$ follows.
\end{proof}

\begin{thm}
Let $n$ be any nonnegative integer. Then $f\in D(\mu)$ if and only if
\[
\int_{\D}|f^{(n+1)}(z)|^{2}(1-|z|^2)^{2n}P_{\mu}(z)dA(z)<\infty.\eqno(4.4)
\]
\end{thm}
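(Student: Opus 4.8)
The plan is to reduce statement (4.4) to the defining condition $\int_\D |f'(z)|^2 P_\mu(z)\,dA(z)<\infty$ by passing back and forth between $f'$ and $f^{(n+1)}$, using the operators $T$ and $S$ just shown to be bounded on $L^2(\D,d\nu)$ with $d\nu(z)=P_\mu(z)\,dA(z)$. I would argue by induction on $n$; the case $n=0$ is the definition, so it suffices to pass from the $n$-th order condition to the $(n+1)$-st. Equivalently, since $f\in D(\mu)$ iff $f'\in L^2(d\nu)$, it is enough to prove that for a fixed analytic $g$ (playing the role of $f'$) one has $g\in L^2(d\nu)$ if and only if $(1-|z|^2)^m g^{(m)}(z)\in L^2(d\nu)$ for some/every positive integer $m$; taking $m=n$ and $g=f'$ gives (4.4).

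The two directions correspond exactly to the two operators. For the ``hard'' direction, suppose $(1-|z|^2)^n f^{(n+1)}\in L^2(d\nu)$ and write $f'$ in terms of $f^{(n+1)}$ via the reproducing-type integral formula: for $f\in H(\D)$ with sufficiently large weight $\alpha$,
\[
f'(z)=c_{n,\alpha}\int_\D \frac{(1-|w|^2)^{\alpha+n}\,f^{(n+1)}(w)}{(1-z\bar w)^{2+n+\alpha}}\,dA(w),
\]
which is the standard formula obtained by differentiating the Bergman-type reproducing formula $f(z)=c\int_\D f(w)(1-|w|^2)^\alpha (1-z\bar w)^{-2-\alpha}\,dA(w)$ $n+1$ times and integrating by parts (see \cite[Chapter 2]{Zhu1}). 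Setting $F(z)=(1-|z|^2)^n f^{(n+1)}(z)$, this reads $|f'(z)|\lesssim \int_\D H(z,w)|F(w)|\,d\nu(w)$ with $H$ exactly the kernel in \eqref{eqopT} (the $P_\mu(w)$ in the denominator of $H$ cancels the $P_\mu(w)$ in $d\nu$). By Theorem \ref{thmopT}, $T$ is bounded on $L^2(d\nu)$, hence $f'\in L^2(d\nu)$, i.e. $f\in D(\mu)$. For the converse direction, suppose $f\in D(\mu)$ and express $(1-|z|^2)^n f^{(n+1)}(z)$ through $f'$: differentiating the reproducing formula for $f'$ $n$ times gives
\[
(1-|z|^2)^n f^{(n+1)}(z)=c\,(1-|z|^2)^n\int_\D \frac{(1-|w|^2)^{\alpha}\,f'(w)}{(1-z\bar w)^{2+n+\alpha}}\,dA(w),
\]
and since $|1-z\bar w|\gtrsim 1$-free estimates give $(1-|z|^2)^n|1-z\bar w|^{-n}\le 1$, the modulus is dominated by $\int_\D L(z,w)|f'(w)|\,d\nu(w)$ with $L$ the kernel of \eqref{eqopS}; Theorem \ref{thmopS} then yields $(1-|z|^2)^n f^{(n+1)}\in L^2(d\nu)$.

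The main obstacle, and the step deserving the most care, is the justification of the integral representation formulas and in particular checking that the boundary/growth terms in the integration by parts vanish, so that the formulas are valid for every $f\in D(\mu)$ (not merely for polynomials); here one uses that $D(\mu)\subset H^2$ and a density argument, approximating $f$ by its dilates $f_\rho(z)=f(\rho z)$ and verifying that the relevant integrals converge uniformly once $\alpha$ is taken large enough. A secondary technical point is to fix a single value of $\alpha$ that is simultaneously ``sufficiently large'' for both Theorem \ref{thmopT} and Theorem \ref{thmopS} and for the validity of the reproducing formulas; since each of those requirements is of the form $\alpha>\alpha_0$, any $\alpha$ exceeding the maximum works. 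Once these representations are in place, the equivalence is immediate from the boundedness of $T$ and $S$, and the induction closes.
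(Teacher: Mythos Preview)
Your overall strategy matches the paper's: dominate one of $f'$ and $(1-|z|^2)^n f^{(n+1)}$ by $T$ or $S$ applied to the other, then invoke the boundedness of those operators on $L^2(d\nu)$. Your second (``converse'') direction is correct: differentiating the reproducing formula for $f'$ $n$ times and using $(1-|z|^2)^n\le 2^n|1-z\bar w|^n$ gives domination of $(1-|z|^2)^n|f^{(n+1)}(z)|$ by $S|f'|(z)$, so Theorem~\ref{thmopS} finishes. The paper actually uses $T$ here (keeping the factor $(1-|z|^2)^n$ and matching the kernel $H$ exactly), but your trivial bound works and is a bit simpler.

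The gap is in your ``hard'' direction. The formula you write,
\[
f'(z)=c_{n,\alpha}\int_\D \frac{(1-|w|^2)^{\alpha+n}\,f^{(n+1)}(w)}{(1-z\bar w)^{2+n+\alpha}}\,dA(w),
\]
has the wrong exponent on $(1-z\bar w)$: integrating the reproducing formula for $f^{(n+1)}$ $n$ times in $z$ \emph{lowers} that exponent by $n$, so the denominator should be $(1-z\bar w)^{2+\alpha}$ (together with a bounded correction $h(z,w)$ absorbing the $\bar w^{-n}$ factors and polynomial terms). With your stated exponent, after setting $F(w)=(1-|w|^2)^n f^{(n+1)}(w)$ the kernel you obtain is
\[
\frac{(1-|w|^2)^\alpha}{|1-z\bar w|^{2+n+\alpha}P_\mu(w)},
\]
which is neither $H$ nor $L$ and is too singular to be bounded on $L^2(d\nu)$. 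In particular, the claim that this is ``exactly the kernel $H$'' cannot be right: $H(z,w)$ carries the factor $(1-|z|^2)^n$, yet your expression for $f'(z)$ has no weight in $z$ at all. With the correct exponent $2+\alpha$, the kernel applied to $F$ is precisely $L$, and it is Theorem~\ref{thmopS} (not Theorem~\ref{thmopT}) that gives $f'\in L^2(d\nu)$; this is exactly what the paper does. So your assignment of operators to directions is swapped, and once the formula is corrected both directions go through using $S$ alone.
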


\begin{proof}
Suppose that $f\in D(\mu)$, then  it has the following integral representation:
\[
f'(z)=(\alpha+1)\int_{\D}\frac{f'(w)(1-|w|^2)^{\alpha}}{(1-z\bar{w})^{2+\alpha}}dA(w),\,\,\,\,z\in\D,\,\alpha>1.
\]
Differentiating under the integral sign $n$ times and multiplying the result by $(1-|z|^2)^{n}$, we have
\[
(1-|z|^2)^{n}f^{(n+1)}(z)=C\int_{\D}\frac{(1-|z|^2)^{n}
(1-|w|^2)^{\alpha}\bar{w}^{n}f'(w)}{(1-z\bar{w})^{2+\alpha+n}}dA(w),
\]
where $C$ is a positive constant depending only on $\alpha$ and $n$. In particular,
\[
(1-|z|^2)^{n}\big|f^{(n+1)}(z)\big|\leq C\int_{\D}H(z,w)|f'(w)|d\nu(w).
\]
From Theorem \ref{thmopT}, we obtain
\[
\int_{\D}(1-|z|^2)^{2n}|f^{(n+1)}(z)|^2d\nu(z)\leq C\int_{\D}|f'(z)|^2d\nu(z).
\]

Conversely, integrating $n$-times both sides of the following representation
 (see, for example, \cite[Corollary 1.5]{HKZ} or \cite[Corollary 8]{WZ} ),
\[
f^{(n+1)}(z)=(n+\alpha+1)\int_{\D}\frac{f^{(n+1)}(w)(1-|w|^{2})^{n}(1-|w|^{2})^{\alpha}dA(w)}{(1-z\bar{w})^{2+n+\alpha}},
\]
we get
\[
f'(z)=\int_{\D}\frac{h(z,w)f^{(n+1)}(w)(1-|w|^{2})^{n}(1-|w|^{2})^{\alpha}dA(w)}{(1-z\bar{w})^{2+\alpha}},
\]
where $h(z,w)$ is a bounded function in $z$ and $w$. In particular, we have
\[
|f'(z)|\leq C\int_{\D}L(z,w)|f^{(n+1)}(w)|(1-|w|^{2})^{n}d\nu(w)
\]
and from Theorem \ref{thmopS}, we obtain
\[
\int_{\D}|f'(z)|^{2}d\nu(z)\leq C\int_{\D}\big|f^{(n+1)}(z)\big|^{2}(1-|z|^{2})^{2n}d\nu(z)\]
and the result follows.
\end{proof}

\section{Decomposition theorem for $D(\mu)$ spaces}
In this section, as a main result of the article, we show a decomposition theorem for Dirichlet-type spaces $D(\mu)$. Decomposition theorems in different function spaces such as Bergman spaces, Bloch spaces, Dirichlet spaces, BMOA space and $Q_p$ spaces have been established and proved its usefulness in several articles. See, for example,  \cite{R}, \cite{RS}, \cite{RW}
and \cite{WX}.

We will say that a sequence of points $\{z_{j}\}_{j=1}^{\infty} \in \D$ is {\em $\eta$-separated}, if there exists $\eta>0$ such that
\[
\inf_{j\neq k}d(z_{j},z_{k})\geq \eta.
\]
On the other hand, we will say that $\{z_{j}\}_{j=1}^{\infty}$ is {\em $\eta$-dense} if
\[
\D=\bigcup_{j=1}^{\infty}B(z_{j},\eta).
\]

The following two lemmas are standard and their proofs can be found in \cite[Lemmas 4.10 and 4.7]{Zhu2}.

\begin{lem}\label{lemdecomp1}
For any $\eta\in (0,1)$, there is a $\frac{\eta}{2}$-separated and $\eta$-dense sequence $\{z_j\}_{j=1}^{\infty}\subset\D$ and Lebesgue measurable sets $D_{j}$ (j=1,2,$\cdots$) such that:
\begin{itemize}
\item[(1)]$B(z_{j},\frac{\eta}{4})\subset D_{j} \subset B(z_{j},\eta)$;
\item[(2)]$D_{i}\cap D_{j}=\varnothing$, if $i\neq j$;
\item[(3)]$\D=\bigcup^{\infty}_{j=1}D_{j}$.
\end{itemize}
\end{lem}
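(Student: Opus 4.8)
The plan is to obtain $\{z_j\}$ by a maximality argument and then to carve out the sets $D_j$ by a greedy assignment of the points of $\D$ that respects the small balls $B(z_j,\frac{\eta}{4})$, which turn out to be pairwise disjoint automatically. First I would fix $\eta\in(0,1)$ and consider the family of all $\frac{\eta}{2}$-separated subsets of $\D$, ordered by inclusion; the union of a chain of such sets is again $\frac{\eta}{2}$-separated, so Zorn's lemma produces a maximal element $Z$. Maximality forces $Z$ to be $\eta$-dense: if some $w\in\D$ had $d(w,z)\ge\frac{\eta}{2}$ for every $z\in Z$, then $Z\cup\{w\}$ would still be $\frac{\eta}{2}$-separated, contradicting maximality; hence every $w\in\D$ lies in some $B(z,\frac{\eta}{2})\subset B(z,\eta)$. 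The balls $B(z,\frac{\eta}{4})$ with $z\in Z$ are pairwise disjoint, since a common point would place two centres within Bergman distance $<\frac{\eta}{2}$ of each other by the triangle inequality; each such ball is a nonempty Euclidean disc (the image of a disc about the origin under the involution $\varphi_z$), hence open and of positive area, and a disjoint family of nonempty open subsets of $\D$ is necessarily countable, so $Z=\{z_j\}_{j=1}^{\infty}$.

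Next I would define the sets $D_j$ as follows. By the previous paragraph each $w\in\D$ belongs to \emph{at most one} ball $B(z_j,\frac{\eta}{4})$ and to \emph{at least one} ball $B(z_j,\eta)$. Assign to $w$ the index $j(w):=j$ if $w\in B(z_j,\frac{\eta}{4})$ for the unique such $j$, and otherwise let $j(w)$ be the least index $j$ with $w\in B(z_j,\eta)$; then set $D_j=\{w\in\D:j(w)=j\}$. Since every point receives exactly one index, the $D_j$ are pairwise disjoint with union $\D$, which is (2) and (3). If $w\in B(z_j,\frac{\eta}{4})$ then $j(w)=j$, so $B(z_j,\frac{\eta}{4})\subset D_j$; conversely, if $w\in D_j$ then either $w\in B(z_j,\frac{\eta}{4})\subset B(z_j,\eta)$ or $w$ was assigned to $j$ precisely because $w\in B(z_j,\eta)$, so $D_j\subset B(z_j,\eta)$, which is (1). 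Finally $D_j$ is built from the open balls $B(z_k,\frac{\eta}{4})$ and $B(z_k,\eta)$ by countably many unions, intersections and complements, hence is Borel and in particular Lebesgue measurable.

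The argument is essentially bookkeeping, so I do not expect a genuine obstacle; the one point that must be got right is reconciling the three demands on the $D_j$ simultaneously, and the device that makes this painless is the observation that $\frac{\eta}{2}$-separation already renders the family $\{B(z_j,\frac{\eta}{4})\}$ disjoint, so these small balls may be returned to their own centres without ever creating an overlap. (Alternatively, one may follow Zhu and first disjointify the cover by $E_j=B(z_j,\eta)\setminus\bigcup_{k<j}B(z_k,\eta)$ and then adjust each $E_j$ on the small balls; the single assignment above packages both steps at once.)
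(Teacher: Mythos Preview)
The paper does not give its own proof of this lemma; it explicitly declares the result standard and refers the reader to Zhu's book (\cite[Lemmas~4.10 and~4.7]{Zhu2}). Your write-up is a correct, self-contained version of exactly that standard construction: a maximal $\frac{\eta}{2}$-separated set obtained via Zorn's lemma is automatically $\frac{\eta}{2}$-dense (hence $\eta$-dense), the triangle inequality makes the balls $B(z_j,\frac{\eta}{4})$ pairwise disjoint, countability follows, and the greedy index assignment yields the Borel partition $\{D_j\}$ with the required sandwich property. There is nothing to contrast here beyond noting that you have filled in the details the paper chose to omit.
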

\begin{lem}\label{lemdecomp2}
For any $\eta\in (0,1)$ and $N\in \mathbb{N}$, there is an $\frac{\eta}{2}$-separated and $\eta$-dense sequence $\{z_{j}\}_{j=1}^{\infty}\subset\D$ such that any $z\in \D$ lies in at most $N$ of the sets $B(z_{j},2\eta)$ (j=1,2,$\cdots$).
\end{lem}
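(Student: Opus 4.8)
The plan is to produce $\{z_j\}$ as a \emph{maximal} $\tfrac{\eta}{2}$-separated subset of $\D$ (with respect to the Bergman metric) and then read off all three required properties from maximality, with only one genuine estimate at the end. First I would invoke Zorn's lemma to obtain a set $\{z_j\}_{j\ge 1}\subset\D$ that is maximal among $\tfrac{\eta}{2}$-separated subsets of $\D$. It is automatically countable: the balls $B(z_j,\tfrac{\eta}{4})$ are pairwise disjoint (if $w\in B(z_i,\tfrac{\eta}{4})\cap B(z_j,\tfrac{\eta}{4})$ then $d(z_i,z_j)<\tfrac{\eta}{2}$, contradicting separation), and each is a nonempty open subset of the finite-area disk $\D$. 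By construction $\inf_{i\neq j}d(z_i,z_j)\ge\tfrac{\eta}{2}$, so the sequence is $\tfrac{\eta}{2}$-separated.

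Next I would deduce $\eta$-density. If some $z\in\D$ satisfied $d(z,z_j)\ge\tfrac{\eta}{2}$ for every $j$, then $\{z_j\}\cup\{z\}$ would still be $\tfrac{\eta}{2}$-separated, contradicting maximality; hence every $z\in\D$ lies within Bergman distance $\tfrac{\eta}{2}<\eta$ of some $z_j$, i.e. $\D=\bigcup_j B(z_j,\eta)$.

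The hard part is the bounded-overlap property for the enlarged balls $B(z_j,2\eta)$, and it is essentially the only step requiring computation. Fix $z\in\D$ and suppose $z\in B(z_{j_1},2\eta)\cap\cdots\cap B(z_{j_m},2\eta)$ for distinct $j_1,\dots,j_m$. Then $d(z,z_{j_i})<2\eta$ for each $i$, so $1-|z_{j_i}|^2\approx 1-|z|^2$ with constants depending only on $\eta$, and for $w\in B(z_{j_i},\tfrac{\eta}{4})$ the triangle inequality gives $d(z,w)<3\eta$, i.e. $B(z_{j_i},\tfrac{\eta}{4})\subset B(z,3\eta)$. These $m$ balls are pairwise disjoint by separation, so invoking the comparability $|B(a,\rho)|\approx(1-|a|^2)^2$ (with constant depending on $\rho$ only, equivalently the M\"obius invariance of hyperbolic area) we obtain
\[
m\, c_1(\eta)\,(1-|z|^2)^2\;\le\;\sum_{i=1}^{m}\bigl|B(z_{j_i},\tfrac{\eta}{4})\bigr|\;\le\;\bigl|B(z,3\eta)\bigr|\;\le\;c_2(\eta)\,(1-|z|^2)^2,
\]
whence $m\le c_2(\eta)/c_1(\eta)=:N$, a bound depending only on $\eta$. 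Thus $\{z_j\}$ is $\tfrac{\eta}{2}$-separated, $\eta$-dense, and every point of $\D$ belongs to at most $N$ of the sets $B(z_j,2\eta)$, which is the asserted property. Everything except the displayed area count is a soft consequence of the maximality of the separated set.
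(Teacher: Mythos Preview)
Your argument is correct and is precisely the standard proof: take a maximal $\tfrac{\eta}{2}$-separated set, obtain $\eta$-density from maximality, and bound the overlap of the enlarged balls by packing the disjoint balls $B(z_{j_i},\tfrac{\eta}{4})$ into $B(z,3\eta)$ and comparing areas via $|B(a,\rho)|\approx(1-|a|^2)^2$. The paper does not give its own proof of this lemma; it simply cites Zhu's book (Lemma~4.7 in \cite{Zhu2}), and what you have written is exactly the argument found there. One small remark: the lemma as stated in the paper is slightly misphrased (``for any $\eta$ and $N$''), whereas what is actually true---and what you prove---is that $N$ is determined by $\eta$; your reading is the correct one.
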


We will also need the following three lemmas which can be found in \cite{R} or \cite{RS}.
\begin{lem}\label{lemkernel}
 If $z_{0}\in\D$ and $\eta\leq 1$,
there exists a constant $C>0$, independent of $\eta$ and $z_0$, such that
\[
|k_w(z)-k_w(z_0)|\leq C\eta|k_w(z)|,
\]
for all $w\in\D$ and $z\in B(z_0,\eta)$, where
\[
k_w(z)=\frac{(1-|z|^2)^{b-1}}{(1-\bar{z}w)^{b+1}},\qquad
b>0.
\]
\end{lem}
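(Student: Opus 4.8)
The plan is to exploit the fact that, for fixed $w$, the function $k_w$ is ``slowly varying'' in the Bergman metric: the product of $1-|u|^2$ with the full gradient of $k_w$ at $u$ is dominated by $|k_w(u)|$, and on any Bergman ball of radius at most $1$ the values $|k_w(u)|$ are mutually comparable. The proof then splits into three steps.

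First I would record the comparability estimate. Fix $\eta\le 1$ and $u\in B(z_0,\eta)$, and put $a=\varphi_{z_0}(u)$, so that $|a|=\tanh\!\big(d(u,z_0)/2\big)\le\tanh(1/2)<1$. The Möbius identities
\[
1-|u|^2=\frac{(1-|a|^2)(1-|z_0|^2)}{|1-\overline{z_0}a|^2},\qquad
1-\overline{u}w=\frac{(1-\overline{z_0}w)+\overline{a}(w-z_0)}{1-z_0\overline{a}},
\]
together with $\big|\overline{a}(w-z_0)/(1-\overline{z_0}w)\big|=|a|\,|\varphi_{z_0}(w)|\le|a|$, yield $1-|u|^2\approx 1-|z_0|^2$ and $|1-\overline{u}w|\approx|1-\overline{z_0}w|$ with constants independent of $z_0$, $w$ and $\eta$. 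Consequently $|k_w(u)|\approx|k_w(z_0)|$, and in particular $|k_w(u)|\le C|k_w(z)|$ for every $u\in B(z_0,\eta)$ and every $w\in\D$, with $C$ depending only on $b$.

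Next I would prove the pointwise gradient bound
\[
(1-|u|^2)\big(|\partial_u k_w(u)|+|\partial_{\overline{u}}k_w(u)|\big)\le C_b\,|k_w(u)|,\qquad u,w\in\D .
\]
Writing $k_w(u)=(1-u\overline{u})^{b-1}(1-\overline{u}w)^{-(b+1)}$ and differentiating, the $\partial_u$ term and the first $\partial_{\overline{u}}$ term reproduce $|k_w(u)|$ directly, while the remaining $\partial_{\overline{u}}$ term carries an extra factor $|1-\overline{u}w|^{-1}$ which is absorbed by $1-|u|^2\le(1+|u|)|1-\overline{u}w|\le 2|1-\overline{u}w|$; here $C_b$ depends only on $b$.

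Finally I would integrate along the Bergman geodesic $\gamma(t)=\varphi_{z_0}\!\big(t\,\varphi_{z_0}(z)\big)$, $t\in[0,1]$, from $z_0$ to $z$. Because $\varphi_{z_0}$ is an isometry, $d(z_0,\gamma(t))\le d(z_0,z)<\eta$, so $\gamma$ lies in $B(z_0,\eta)$, and the Bergman length $\int_0^1\frac{2|\gamma'(t)|}{1-|\gamma(t)|^2}\,dt$ of $\gamma$ equals $d(z_0,z)<\eta$. The fundamental theorem of calculus then gives
\[
|k_w(z)-k_w(z_0)|\le\int_0^1\!\big(|\partial_u k_w|+|\partial_{\overline{u}}k_w|\big)\big|_{\gamma(t)}|\gamma'(t)|\,dt
\le\frac{C_b}{2}\Big(\sup_{u\in B(z_0,\eta)}|k_w(u)|\Big)\,d(z_0,z)\le C\,\eta\,|k_w(z)|,
\]
using the gradient bound of Step 2 and then the comparability of Step 1. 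I expect the only delicate point to be the bookkeeping in Step 1 — checking that every comparability constant there really is independent of $z_0$, $w$ and $\eta$ — since Steps 2 and 3 are routine. (A calculus-free alternative writes $k_w(z)-k_w(z_0)=k_w(z)\big(1-k_w(z_0)/k_w(z)\big)$, factors the ratio through the same Möbius identities, and checks that each factor is $1+O(\eta)$; there one need only observe that $1-\overline{z}w$ lies in the right half-plane, so the principal branch of $(1-\overline{z}w)^{b+1}$ causes no trouble.)
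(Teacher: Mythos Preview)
The paper does not actually prove this lemma; it is stated without proof, with a pointer to \cite{R} and \cite{RS}. So there is no in-paper argument to compare against, and your task reduces to giving a self-contained proof.

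Your three-step argument is correct. Step~1 is fine: with $a=\varphi_{z_0}(u)$ one has $|a|\le\tanh(1/2)$ uniformly for $\eta\le1$, and then both $1-|a|^2$ and $|1-z_0\overline{a}|$ lie in fixed compact subintervals of $(0,\infty)$, so the constants in $1-|u|^2\approx 1-|z_0|^2$ and $|1-\overline{u}w|\approx|1-\overline{z_0}w|$ are indeed independent of $z_0,w,\eta$. Step~2 is a direct computation; note only that for $0<b<1$ the factor $b-1$ becomes $|b-1|$, which your bound $C_b$ absorbs. Step~3 is the standard path-integration device, and your observation that the Bergman length of the geodesic equals $d(z_0,z)<\eta$ (with the normalization $d(0,w)=\log\frac{1+|w|}{1-|w|}$ used in the paper) gives exactly the factor $\eta$ in the conclusion.

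This is in fact the approach behind the cited result: the kernels $k_w$ are ``of slow variation'' in the hyperbolic metric, and Rochberg's proof proceeds by the same gradient/comparability mechanism (often phrased via the change of variable $u=\varphi_{z_0}(\zeta)$ and a Lipschitz estimate near $\zeta=0$, which is equivalent to your Steps~2--3). Your alternative sketch via the ratio $k_w(z_0)/k_w(z)=1+O(\eta)$ is also a legitimate route and is closer to how some authors present it; either way the content is the same.
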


\begin{lem}
Let $0<\eta<\frac{1}{4}$ and $\{z_{j}\}_{j=0}^{\infty}$ be an $\eta$-separated. There exists a constant $C>0$ such that for any  $f\in H(\D)$ and for all $j=1,2,\cdots$.

\[
\int_{D_{j}}|f(z)-f(z_{j})|dA(z)\leq C\eta^{3}\int_{B({z_{j},\frac{\eta}{4}})}|f(z)|dA(z)
\]
\end{lem}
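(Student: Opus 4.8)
The plan is to localize the estimate by moving the center $z_j$ to the origin with the involution $\varphi_{z_j}$, and then to exploit analyticity through a first-order (mean value) expansion of $f$ together with the sub-mean value property. First I would record two geometric facts to be used throughout: by Lemma \ref{lemdecomp1} we have $B(z_j,\eta/4)\subset D_j\subset B(z_j,\eta)$, and since $\eta<\tfrac14$ each Bergman ball $B(z_j,\eta)$ is a Euclidean disk of radius comparable to $\eta(1-|z_j|^2)$, so that $|D_j|\approx\eta^{2}(1-|z_j|^{2})^{2}$. Setting $g=f\circ\varphi_{z_j}$ and changing variables $z=\varphi_{z_j}(w)$ turns $dA(z)$ into $|\varphi_{z_j}'(w)|^{2}dA(w)=\frac{(1-|z_j|^{2})^{2}}{|1-\bar z_j w|^{4}}dA(w)$; on the relevant disks, which sit inside $\{|w|<\tfrac14\}$, one has $|1-\bar z_j w|\approx 1$, so the Jacobian is comparable to $(1-|z_j|^{2})^{2}$. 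This same factor then appears on both sides of the inequality and cancels, reducing the claim to a model inequality at the origin for the analytic function $g$.

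Next I would extract the vanishing of $f-f(z_j)$ at $z_j$. Because Bergman balls are convex, for $z\in D_j\subset B(z_j,\eta)$ the segment from $z_j$ to $z$ remains in $B(z_j,\eta)$, so $f(z)-f(z_j)=(z-z_j)\int_0^{1}f'\big(z_j+t(z-z_j)\big)\,dt$ and hence $|f(z)-f(z_j)|\le C\eta(1-|z_j|^{2})\sup_{B(z_j,\eta)}|f'|$. Integrating over $D_j$ and using $|D_j|\approx\eta^{2}(1-|z_j|^{2})^{2}$ produces the quantity $\eta^{3}(1-|z_j|^{2})^{3}\sup_{B(z_j,\eta)}|f'|$, which already exhibits the sought factor $\eta^{3}$. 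I would then convert the derivative bound into an integral of $|f|$: a Schwarz--Pick/Cauchy estimate gives $\sup_{B(z_j,\eta)}|f'|\le \frac{C}{1-|z_j|^{2}}\sup_{B(z_j,2\eta)}|f|$, and, since $|f|$ is subharmonic, the sub-mean value property bounds a supremum of $|f|$ over a ball by the average of $|f|$ over a centered ball. Collecting the powers of $(1-|z_j|^{2})$ and $\eta$ is then a matter of bookkeeping.

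The main obstacle is the mismatch of the two domains: $D_j$ reaches out to Bergman radius $\eta$, whereas the right-hand integral is taken only over the smaller ball $B(z_j,\eta/4)$, so the final step must control the size of the analytic function $f$ on the larger set by its $L^{1}$-mass on the smaller ball. This is precisely where analyticity is indispensable, and it is the step where the powers of $\eta$ must be tracked with the greatest care in order to land exactly on $C\eta^{3}\int_{B(z_j,\eta/4)}|f|\,dA$; the naive sub-mean value bound only controls $\sup_{B(z_j,2\eta)}|f|$ by an average over a \emph{larger} ball, so realizing the stated smaller ball on the right is the delicate heart of the argument. I would attempt this reverse control by centering the sub-mean value estimate at $z_j$ and comparing the volumes $|B(z_j,2\eta)|\approx|B(z_j,\eta/4)|\approx\eta^{2}(1-|z_j|^{2})^{2}$, which is the one place where the sandwiching $B(z_j,\eta/4)\subset D_j\subset B(z_j,\eta)$ from Lemma \ref{lemdecomp1} and the separation hypothesis on $\{z_j\}$ must enter decisively.
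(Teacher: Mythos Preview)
The paper does not supply a proof of this lemma at all: it merely cites Rochberg \cite{R} and Richter--Sundberg \cite{RS}. So there is no in-paper argument to compare your proposal against.

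That said, your outline runs into a real obstruction precisely at the step you flag as ``the delicate heart of the argument.'' The reverse control you propose---bounding $\sup_{B(z_j,2\eta)}|f|$ by the $L^{1}$-mass of $|f|$ over the strictly smaller ball $B(z_j,\eta/4)$---is impossible for analytic functions, not merely delicate. A clean counterexample at $z_j=0$ is $f(z)=z^{n}$: for small $\eta$ one has
\[
\int_{B(0,\eta)}|z|^{n}\,dA(z)\asymp \frac{(\eta/2)^{n+2}}{n+2},\qquad
\eta^{3}\int_{B(0,\eta/4)}|z|^{n}\,dA(z)\asymp \frac{\eta^{3}(\eta/8)^{n+2}}{n+2},
\]
so the ratio of left to right is of order $4^{\,n+2}/\eta^{3}\to\infty$ as $n\to\infty$. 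No constant $C$ independent of $f$ can survive this. Your remark that $|B(z_j,2\eta)|$ and $|B(z_j,\eta/4)|$ are comparable is correct but irrelevant: the sub-mean value inequality applied at a point $w\in B(z_j,2\eta)$ needs a disk \emph{centered at} $w$, and such disks are not contained in $B(z_j,\eta/4)$.

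What this actually reveals is a misprint in the paper's statement (and in the companion Lemma~5.5): in the standard sources the ball on the right-hand side is a \emph{larger} Bergman disk, e.g.\ $B(z_j,2\eta)$ or $B(z_j,4\eta)$, not $B(z_j,\eta/4)$. With that correction your plan goes through cleanly: the mean value expansion yields the factor $\eta$, the area of $D_j$ supplies $\eta^{2}$, and a Cauchy/Schwarz--Pick estimate followed by the ordinary sub-mean value property over the enlarged ball converts $\sup|f'|$ into $\int|f|\,dA$. The separation hypothesis then enters exactly as in Lemma~5.5 to control the overlap of the enlarged balls. So your approach is the right one; the inequality you were asked to prove, as literally written, is false.
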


\begin{lem}
Let $0<\eta<1$ and $\{z_{j}\}_{j=1}^{\infty}$ be an $\eta$-separated. There exists a positive integer $N_{0}=O\left(\eta^{-2}\right)$ such that each point of  $\D$ lies in at most $N_{0}$ of the discs in $\{B({z_{j},\frac{\eta}{4}})\}_{j=1}^{\infty}$. Furthermore, if $b>0$ and $f$ is analytic on $\D$, then
\[
\sum_{j=1}^{\infty}\int_{B({z_{j},\frac{\eta}{4}})}|f(z)|^{2}(1-|z|^{2})^{b-1}dA(z)
\leq N_{0}\int_{\D}|f(z)|^{2}(1-|z|^{2})^{b-1}dA(z).
\]
\end{lem}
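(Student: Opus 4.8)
The plan is to prove the two assertions in turn, the second being a quick consequence of the first via Tonelli's theorem. For the bounded‑overlap statement, the shortest route is to notice that since $\{z_j\}$ is $\eta$–separated and $\tfrac\eta4+\tfrac\eta4<\eta$, the triangle inequality for the Bergman metric forces $B(z_i,\tfrac\eta4)\cap B(z_j,\tfrac\eta4)=\varnothing$ for $i\neq j$; hence every $\zeta\in\D$ lies in \emph{at most one} of these balls, and one may take $N_0=1$, which is \emph{a fortiori} $O(\eta^{-2})$. If one prefers the standard covering argument — the one in \cite{R} and \cite{RS}, which also survives when the radius is enlarged — one fixes $\zeta\in\D$, sets $J=\{j:\zeta\in B(z_j,\tfrac\eta4)\}$, observes that $\{B(z_j,\tfrac\eta8)\}_{j\in J}$ are pairwise disjoint and contained in $B(\zeta,\tfrac{3\eta}{8})$, and then compares areas: by Section~4.2 of \cite{Zhu1}, $|B(w,\rho)|\approx(1-|w|^2)^2$ for a fixed radius $\rho$, and $1-|w|^2\approx 1-|\zeta|^2$ on $B(\zeta,\tfrac\eta4)$ when $0<\eta<1$, so
\[
\#J\;\le\;\frac{|B(\zeta,\tfrac{3\eta}{8})|}{\inf_{j\in J}|B(z_j,\tfrac\eta8)|}\;\le\;N_0
\]
for a constant $N_0$ depending only on $\eta$, and tracking that dependence yields $N_0=O(\eta^{-2})$.

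For the summation inequality I would put $g(z)=|f(z)|^2(1-|z|^2)^{b-1}$, which is nonnegative and measurable (only $b>0$ is used here; analyticity of $f$ plays no role), and simply interchange sum and integral. By Tonelli's theorem together with the overlap bound just obtained,
\[
\sum_{j=1}^{\infty}\int_{B(z_j,\eta/4)}g\,dA=\int_{\D}\Big(\sum_{j=1}^{\infty}\mathbf{1}_{B(z_j,\eta/4)}\Big)g\,dA\le N_0\int_{\D}g\,dA,
\]
which is precisely the claimed estimate.

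I do not expect any genuine obstacle: the whole statement is bookkeeping built on the triangle inequality and Tonelli. The one quantitative point deserving care is the area comparison for Bergman balls and the way its constants depend on $\eta$, which is what produces the exponent $-2$ in $N_0=O(\eta^{-2})$.
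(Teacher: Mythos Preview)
Your argument is correct. Note, however, that the paper does not supply its own proof of this lemma; it is merely quoted from \cite{R} and \cite{RS}, so there is no ``paper's proof'' to compare against directly.

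Your first observation is in fact sharper than the stated lemma: since $d$ is a genuine metric, $\eta$-separation together with $\tfrac{\eta}{4}+\tfrac{\eta}{4}<\eta$ forces the balls $B(z_j,\tfrac{\eta}{4})$ to be pairwise disjoint, so one may take $N_0=1$. The $O(\eta^{-2})$ bound in the statement is then vacuous; presumably the lemma as recorded here is a specialization of a result in \cite{R}, \cite{RS} formulated for balls whose radius may exceed half the separation constant (compare Lemma~\ref{lemdecomp2}, where the radius $2\eta$ is four times the separation $\tfrac{\eta}{2}$ and genuine overlap occurs). Your alternative area-comparison argument is the standard one and is what is actually needed in that more general setting. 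The deduction of the integral inequality from the overlap bound via Tonelli is exactly right, and you are correct that analyticity of $f$ is irrelevant there.
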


\begin{lem}\label{lemPmu}
Let $\mu$ be a positive finite Borel measure on $\partial\D$. For $\eta\in (0,1)$, let $\{z_j\}^\infty_{j=1}\subset\D$ be an $\eta$-separated sequence. If $z\in B(z_j, \eta),\,j=1,2,\cdots,$ then there exist two positive constants $C_1$ and $C_2$ such that
\[
C_1P_{\mu}(z_{j})\leq P_{\mu}(z)\leq
C_2P_{\mu}(z_{j}),\,\,\,\,j=1,2,\cdots.
\]
\end{lem}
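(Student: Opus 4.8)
The plan is to reduce the estimate for $P_\mu$ to a pointwise comparison of the Poisson-type kernels appearing in the definition of $P_\mu(z)=\int_{\partial\D}\frac{1-|z|^2}{|\zeta-z|^2}\,d\mu(\zeta)$. Since $P_\mu$ is an integral against the fixed measure $\mu$ of the family of positive kernels $z\mapsto \frac{1-|z|^2}{|\zeta-z|^2}$, it suffices to show that for each fixed $\zeta\in\partial\D$ and for $z\in B(z_j,\eta)$ one has
\[
C_1\,\frac{1-|z_j|^2}{|\zeta-z_j|^2}\;\le\;\frac{1-|z|^2}{|\zeta-z|^2}\;\le\;C_2\,\frac{1-|z_j|^2}{|\zeta-z_j|^2},
\]
with $C_1,C_2$ depending only on $\eta$ and not on $\zeta$, $j$, or $z$; integrating in $d\mu(\zeta)$ then yields the claim. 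So the whole lemma collapses to a standard fact about hyperbolically bounded perturbations.

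To prove the pointwise comparison I would first record that $z\in B(z_j,\eta)$ means $|\varphi_{z_j}(z)|\le \rho:=\tanh(\eta/2)<1$, which is exactly the pseudohyperbolic bound $|\varphi_{z_j}(z)|\le\rho$. Two well-known consequences of this (see, e.g., the basic estimates in \cite[Chapter 4]{Zhu2}) are: first, $1-|z|^2\approx 1-|z_j|^2$ with constants depending only on $\rho$; and second, $|1-\bar z_j z|\approx 1-|z_j|^2$ is \emph{not} what we need, but rather the more useful $|1-\bar\zeta z|\approx |1-\bar\zeta z_j|$ for $\zeta\in\partial\D$ — equivalently $|\zeta-z|\approx|\zeta-z_j|$. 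The second comparison follows from $\bigl||\zeta-z|-|\zeta-z_j|\bigr|\le |z-z_j|$ together with the fact that $|z-z_j|\lesssim (1-|z_j|^2)\lesssim |\zeta-z_j|$ when $z$ is in a pseudohyperbolic ball of radius $\rho$ about $z_j$; here one uses that $|z-z_j|\le |1-\bar z_j z|\,\cdot\,$(constant) and $|1-\bar z_j z|\approx 1-|z_j|^2$ on such a ball, while trivially $1-|z_j|^2\le |1-\bar\zeta z_j|\le|\zeta-z_j|\cdot 2$. Combining the two comparisons gives $\frac{1-|z|^2}{|\zeta-z|^2}\approx\frac{1-|z_j|^2}{|\zeta-z_j|^2}$ uniformly in $\zeta$.

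Assembling the pieces: fix $z\in B(z_j,\eta)$, write $P_\mu(z)=\int_{\partial\D}\frac{1-|z|^2}{|\zeta-z|^2}\,d\mu(\zeta)$, apply the uniform two-sided bound inside the integral, and pull the constants out to obtain $C_1 P_\mu(z_j)\le P_\mu(z)\le C_2 P_\mu(z_j)$. The main (and only slightly delicate) obstacle is the inequality $|z-z_j|\lesssim |\zeta-z_j|$ for $z$ in the pseudohyperbolic ball: one must be careful that the implied constant depends on $\eta$ alone and not on how close $z_j$ or $\zeta$ is to the boundary. This is handled cleanly by the standard identity $1-|\varphi_{z_j}(z)|^2=\frac{(1-|z_j|^2)(1-|z|^2)}{|1-\bar z_j z|^2}$, which on $|\varphi_{z_j}(z)|\le\rho$ forces $|1-\bar z_j z|^2\le \frac{1}{1-\rho^2}(1-|z_j|^2)(1-|z|^2)$ and hence $|z-z_j|\le |1-\bar z_j z|+\bigl|\bar z_j z - |z_j|^2\bigr|$-type bookkeeping that terminates in $|z-z_j|\le C(\rho)(1-|z_j|^2)$; since $1-|z_j|^2\le 2|\zeta - z_j|$ for every $\zeta\in\partial\D$, the needed domination follows. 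Everything else is routine.
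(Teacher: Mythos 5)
Your proof is correct and follows essentially the same route as the paper: a pointwise two-sided comparison of the Poisson kernels, uniform in $\zeta\in\partial\D$, obtained from $1-|z|^2\approx 1-|z_j|^2$ and $|z-z_j|\lesssim 1-|z_j|^2\lesssim|\zeta-z_j|$ on the Bergman ball, followed by integration against $\mu$. The only (immaterial) difference is that the paper runs the comparison through the regularized kernels $P_{\mu_r}$ and lets $r\to 1^-$, whereas you work directly with $P_\mu$, which is legitimate since $P_\mu$ is itself defined as an integral of the Poisson kernel against $\mu$.
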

\begin{proof} Let $z\in B(z_j, \eta), j=1,2\cdots$ and $r\in(0,1)$. It is easy to check that there exists a constant $C>0$, independent of the sequence
$\{z_j\}^\infty_{j=1}$ and $\eta$ such that
\[
|1-r\bar{\zeta}z| \leq C|1-r\bar{\zeta}z_j|,\qquad \zeta\in \partial\D.
\]
Also, by  Lemma 4.3.4 in \cite{Zhu1}, there is a constant $C>0$ independent of $\{z_j\}^\infty_{j=1}$ and $\eta$ such that, for $z\in B(z_j, \eta)$
\[
1-|z_j|^2\leq C(1-|z|^2).
\]
Therefore,
\[
\frac{r^{2}(1-|z_j|^2)}{|1-r\bar{\zeta}z_j|^2}\leq C\frac{r^{2}(1-|z|^2)}{|1-r\bar{\zeta}z|^2}.
\]
Integrating on $\partial\D$ with respect to $\mu$ and letting $r\to 1^{-}$, we have
\[
P_{\mu}(z_{j})\leq CP_{\mu}(z).
\] The other inequality follows in a similar way.
\end{proof}

\begin{lem}\label{lemmPoiss2}
Let $f\in D(\mu)$ and $\{z_j\}^{\infty}_{j=1}$ be an $\eta$-separated, $0<\eta<1$. Then
\[
\sum^{\infty}_{j=1}(1-|z_j|)^2|f'(z_{j})|^2P_{\mu}(z_j)\leq C\int_{\D}|f'(z)|^2P_{\mu}(z)dA(z)
\]
\end{lem}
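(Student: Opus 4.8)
The plan is to pass from the discrete sum on the left to an integral by exploiting that the points $\{z_j\}$ are $\eta$-separated, hence sit in pairwise disjoint (up to controlled overlap) Bergman balls, and then use the standard subharmonicity estimate for $|f'|^2$ together with the local comparability of $P_\mu$ supplied by Lemma \ref{lemPmu}. First I would fix the radius $\eta/4$ and invoke the subharmonic (area) estimate for the analytic function $f'$: there is a constant $C>0$, depending only on $\eta$, such that
\[
|f'(z_j)|^2\leq \frac{C}{(1-|z_j|^2)^2}\int_{B(z_j,\eta/4)}|f'(z)|^2\,dA(z),\qquad j=1,2,\dots.
\]
Multiplying by $(1-|z_j|)^2 P_\mu(z_j)$, the factors $(1-|z_j|^2)^2$ cancel (up to a constant, since $1-|z_j|\approx 1-|z_j|^2$), so that
\[
(1-|z_j|)^2|f'(z_j)|^2 P_\mu(z_j)\leq C\,P_\mu(z_j)\int_{B(z_j,\eta/4)}|f'(z)|^2\,dA(z).
\]

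Next I would move the Poisson factor inside the integral. For $z\in B(z_j,\eta/4)\subset B(z_j,\eta)$, Lemma \ref{lemPmu} gives $P_\mu(z_j)\leq C_2^{-1}\,$ --- more precisely $P_\mu(z_j)\lesssim P_\mu(z)$ --- with a constant independent of $j$, so that
\[
(1-|z_j|)^2|f'(z_j)|^2 P_\mu(z_j)\leq C\int_{B(z_j,\eta/4)}|f'(z)|^2 P_\mu(z)\,dA(z).
\]
Now I would sum over $j$. By Lemma 5.7 (the bounded-overlap lemma for the discs $B(z_j,\eta/4)$ coming from $\eta$-separation), each point of $\D$ lies in at most $N_0=O(\eta^{-2})$ of these discs, whence
\[
\sum_{j=1}^{\infty}\int_{B(z_j,\eta/4)}|f'(z)|^2 P_\mu(z)\,dA(z)\leq N_0\int_{\D}|f'(z)|^2 P_\mu(z)\,dA(z).
\]
Combining the last two displays yields
\[
\sum_{j=1}^{\infty}(1-|z_j|)^2|f'(z_j)|^2 P_\mu(z_j)\leq C N_0\int_{\D}|f'(z)|^2 P_\mu(z)\,dA(z)=C\,\|f\|^2,
\]
which is the claimed inequality (the constant $C$ here absorbs $N_0$ and hence depends on $\eta$, as permitted by the statement).

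The only delicate point is the interchange of the Poisson kernel between the center $z_j$ and a nearby point $z$: one must be sure that the comparability constants in Lemma \ref{lemPmu} are uniform in $j$, which is exactly why that lemma was stated with constants independent of the separated sequence; the subharmonicity estimate and the finite-overlap count are entirely routine. I would also remark that only the hypothesis $f\in D(\mu)$ enters through the finiteness of the right-hand side, so no separate appeal to membership in $H^2$ is needed.
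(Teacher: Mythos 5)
Your proposal is correct and follows essentially the same route as the paper: the subharmonic mean-value estimate for $|f'|^2$ on a Bergman ball centered at $z_j$, the comparability $P_\mu(z_j)\lesssim P_\mu(z)$ from Lemma \ref{lemPmu}, and a finite-overlap count to pass from the sum of local integrals to the global integral. If anything you are slightly more careful than the paper, which uses the balls $B(z_j,\eta)$ and leaves the bounded-overlap step implicit, whereas you work with $B(z_j,\eta/4)$ so that the overlap bound $N_0=O(\eta^{-2})$ applies verbatim.
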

\begin{proof}
For any  $f\in H(\D)$, we have (see \cite[Proposition 4.13]{Zhu2})
\[
|f'(z_j)|^2\leq \frac{C}{|B(z_j, \eta)|}\int_{B(z_j, \eta)}|f'(w)|^2dA(w).
\]
Note that $|B(z_{j},\eta)|=(1-|z_j|^2)^2$, from Lemma 5.6, we obtain
\begin{eqnarray*}
\sum^{\infty}_{j=1}(1-|z_j|)^2|f'(z_{j})|^2P_{\mu}(z_j)
&\leq & C\sum^{\infty}_{j=1}\int_{B(z_j, \eta)}|f'(w)|^2P_{\mu}(w)dA(w)\\
&\leq & C\int_{D}|f'(w)|^2P_{\mu}(w)dA(w)
\end{eqnarray*}
\end{proof}

Now we are ready to prove the main theorem.

\begin{thm}[Decomposition Theorem]
Let $\mu$ be a nonnegative Borel measure on $\partial\D$ and $b>2$. Then, there exists $d_0>0$ such that for any $d$-separated $\{z_j\}_{j=1}^\infty$ in $\D$ ($0<d<d_0$), we have,
\begin{itemize}
\item[(i)] If $f\in D(\mu)$, then there exists a sequence $\{\lambda_j\}_{j=1}^{\infty}\subset \C$ such that
    \begin{equation}\label{eqdecomp}
    f(z)=f(0)+ \sum_{j=1}^\infty \lambda_j(1-|z_j|^2)^b\left(\frac{1}{(1-\overline{z_j}z)^b}-1\right)
    \end{equation}
    and \[\sum_{j=1}^\infty |\lambda_j|^2P_\mu(z_j)\leq C\|f\|^2_{D(\mu)}.\]
\item[(ii)] If a sequence $\{\lambda_j\}_{j=1}^{\infty}\subset \C$ satisfies that $\sum_{j=1}^\infty |\lambda_j|^2 P_\mu (z)\delta_{z_j}$ is a $\mu$-Carleson measure, then the series defined in (5.1) converges in $D(\mu)$ and \[\|f\|^2_{D(\mu)}\leq C\sum_{j=1}^\infty |\lambda_j|^2 P_\mu (z_j).\]
\end{itemize}
\end{thm}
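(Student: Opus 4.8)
The proof splits into the two announced directions, and the natural strategy is to reduce everything to known facts about the weighted Bergman space $A^2_\nu$ with $d\nu(z)=P_\mu(z)\,dA(z)$, together with the derivative characterization of $D(\mu)$. Recall that $f\in D(\mu)$ iff $f'\in L^2(d\nu)$ (plus the $H^2$-term, which only involves $f(0)$ once the rest is controlled). So in (i) the goal is to produce an atomic decomposition of the function $g:=f'\in L^2(d\nu)$ in terms of the kernels $k_{z_j}(z)=(1-|z_j|^2)^{b-1}/(1-\overline{z_j}z)^{b+1}$, and then integrate termwise to recover $f$; in (ii) the goal is to show that the candidate series, differentiated termwise, lies in $L^2(d\nu)$ with the claimed norm bound, and that the $\mu$-Carleson hypothesis is exactly what makes the termwise differentiation and the convergence legitimate.

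\textbf{Direction (i).} First I would fix $\eta$-separated, $\eta$-dense $\{z_j\}$ together with the Whitney-type sets $D_j$ from Lemma \ref{lemdecomp1}, and define the "averaging" operator $T_\eta$ acting on $h\in L^2(d\nu)$ by
\[
T_\eta h(z)=\sum_{j=1}^\infty \left(\int_{D_j} h(u)\,dA(u)\right)\frac{(1-|z_j|^2)^{b-1}}{(1-\overline{z_j}z)^{b+1}}\cdot\frac{1}{(1-|z_j|^2)^{?}},
\]
with exponents chosen so that $T_\eta$ is a bounded perturbation of the identity on the Bergman-type space. Concretely, one writes $h=f'$, uses the reproducing formula $f'(z)=c_b\int_\D f'(u)(1-|u|^2)^{b-1}(1-\overline u z)^{-(b+1)}\,dA(u)$, replaces the integral by the Riemann sum over the $D_j$'s, and estimates the error $\|f'-T_\eta f'\|_{L^2(d\nu)}$ using Lemma \ref{lemkernel} (which controls $|k_w(z)-k_w(z_j)|$ by $C\eta|k_w(z)|$ on $D_j\subset B(z_j,\eta)$), Lemma \ref{lemPmu} (to replace $P_\mu(z)$ by $P_\mu(z_j)$ on $D_j$ at bounded cost), and the boundedness of the natural integral operator on $L^2(d\nu)$ — this last boundedness is precisely the content of Theorems \ref{thmopT} and \ref{thmopS}, applied with $b+1$ in the role of $2+n+\alpha$. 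The upshot is $\|I-T_\eta\|_{L^2(d\nu)\to L^2(d\nu)}\le C\eta<1$ for $\eta$ small, so $T_\eta$ is invertible; setting the coefficient attached to $k_{z_j}$ in $T_\eta^{-1}f'$ to be (a constant multiple of) $\lambda_j$, one gets $f'=\sum_j\lambda_j k_{z_j}$ in $L^2(d\nu)$ with $\sum_j|\lambda_j|^2\big(\int_{D_j}dA\big)\lesssim\|f'\|^2_{L^2(d\nu)}$; since $\int_{D_j}P_\mu\,dA\approx(1-|z_j|^2)^2 P_\mu(z_j)$ and $\int_{D_j}dA\approx(1-|z_j|^2)^2$, and absorbing the factors $(1-|z_j|^2)$ into the normalization, this becomes $\sum_j|\lambda_j|^2 P_\mu(z_j)\lesssim\|f\|^2_{D(\mu)}$. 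Finally, integrating $k_{z_j}(z)=(1-|z_j|^2)^{b-1}(1-\overline{z_j}z)^{-(b+1)}$ from $0$ to $z$ produces exactly $\frac{1}{b}(1-|z_j|^2)^{b-1}\overline{z_j}^{-1}\big((1-\overline{z_j}z)^{-b}-1\big)$; after rescaling $\lambda_j$ one more time this gives the displayed form $f(z)=f(0)+\sum_j\lambda_j(1-|z_j|^2)^b\big((1-\overline{z_j}z)^{-b}-1\big)$. One must check the rescaling does not damage the coefficient estimate, which is routine since $|z_j|$ is bounded below away from $0$ for all but finitely many $j$, and the finitely many terms near the origin are handled separately.

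\textbf{Direction (ii).} Here I assume $\sum_j|\lambda_j|^2 P_\mu(z)\delta_{z_j}$ is a $\mu$-Carleson measure and must show the series $f$ converges in $D(\mu)$ with $\|f\|^2_{D(\mu)}\lesssim\sum_j|\lambda_j|^2 P_\mu(z_j)$. Differentiating term by term, $f'(z)=\sum_j\lambda_j\,b\,(1-|z_j|^2)^b\overline{z_j}(1-\overline{z_j}z)^{-(b+1)}$, so it suffices to bound $\big\|\sum_j c_j k_{z_j}\big\|^2_{L^2(d\nu)}$ where $c_j\approx\lambda_j(1-|z_j|^2)$ and $k_{z_j}$ is as above. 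Writing out the square and using the dual/testing characterization, $\big\|\sum_j c_j k_{z_j}\big\|_{L^2(d\nu)}=\sup\big\{\big|\sum_j c_j\overline{\langle h,k_{z_j}\rangle_\nu}\big|:\|h\|_{L^2(d\nu)}\le1\big\}$; the pairing $\langle h,k_{z_j}\rangle_\nu$ is comparable to $(1-|z_j|^2)\,\widehat{h}(z_j)$-type quantities, and Cauchy--Schwarz plus the $\mu$-Carleson hypothesis (which says $\sum_j|\lambda_j|^2 P_\mu(z_j)|g(z_j)|^2\lesssim\|g\|^2_{D(\mu)}$ for $g\in D(\mu)$, equivalently the analogous bound for the Bergman-type pairings) closes the estimate. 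The convergence of the series in $D(\mu)$-norm follows from the same inequality applied to tails. Throughout, Lemma \ref{lemPmu} is used to pass freely between $P_\mu(z)$ on $B(z_j,\eta)$ and $P_\mu(z_j)$.

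\textbf{Main obstacle.} The technical heart is the norm estimate $\|I-T_\eta\|<1$ in direction (i): one has to choose the exponent $b$ (the hypothesis $b>2$ is what guarantees enough room), pick auxiliary exponents in the Schur test so that Theorems \ref{thmopT} and \ref{thmopS} apply with the weight $d\nu=P_\mu\,dA$, and verify that the kernel differences genuinely produce a gain of size $\eta$ uniformly in the measure $\mu$ — the point being that all constants must be independent of $\mu$, which is exactly why Lemma \ref{lemestPoiss} and Lemma \ref{lemPmu} were set up with $\mu$-independent constants. The analogous difficulty in (ii) — showing the series converges and the Carleson condition is the right hypothesis — is comparatively soft once the boundedness of $S$ and $T$ on $L^2(d\nu)$ is in hand.
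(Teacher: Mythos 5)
Your treatment of part (i) is in substance the paper's own argument: expand $f'$ by the reproducing formula in the weighted Bergman space $A^2_{b-1}$, replace the integral by a sum over the Whitney pieces $D_j$ of Lemma \ref{lemdecomp1}, prove $\|I-T_\eta\|_{L^2(P_\mu dA)}\lesssim\eta$ using the kernel-oscillation estimate of Lemma \ref{lemkernel} together with the boundedness of the operator $S$ of Theorem \ref{thmopS}, invert by a Neumann series, integrate the kernels to get the atoms $(1-|z_j|^2)^b\big((1-\overline{z_j}z)^{-b}-1\big)$, and read off the coefficient bound from the sub-mean-value property over $B(z_j,\frac{\eta}{4})$, Lemma \ref{lemPmu} and the finite-overlap lemma. (The paper samples $f'(z_j)|D_j|$ instead of averaging $\int_{D_j}f'$, which forces it to invoke an extra oscillation estimate for $f'$ itself; your averaged variant sidesteps that, and the unresolved ``?'' in your normalization should simply be $0$, since the factor $(1-|u|^2)^{b-1}$ is already frozen at $z_j$.) This direction is fine.

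Part (ii) is where your proposal has a genuine gap. You reduce to bounding $\big\|\sum_jc_jk_{z_j}\big\|_{L^2(d\nu)}$ by duality against $h\in L^2(d\nu)$, and then assert that $\langle h,k_{z_j}\rangle_\nu$ is ``comparable to $(1-|z_j|^2)\widehat h(z_j)$-type quantities'' and that the $\mu$-Carleson hypothesis ``equivalently'' yields the analogous bound for these pairings. Neither claim is justified. The pairing $\int_\D h(z)\overline{k_{z_j}(z)}P_\mu(z)\,dA(z)$ carries the weight $P_\mu$, the test functions $h$ are arbitrary non-analytic elements of $L^2(d\nu)$, and $k_{z_j}$ has no reproducing property with respect to $d\nu$; so there is no point-evaluation interpretation to feed into the Carleson condition, which by definition controls $\sum_j|\lambda_j|^2P_\mu(z_j)|g(z_j)|^2$ only for $g\in D(\mu)$. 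The ``analogous bound for the Bergman-type pairings'' is precisely the nontrivial discrete embedding you would have to prove, and your sketch does not prove it. The paper's route is different and avoids duality altogether: the Carleson hypothesis is used exactly once, tested against $g\equiv1$, to conclude $\sum_j|\lambda_j|^2P_\mu(z_j)<\infty$; then each kernel is written exactly as an average over the disjoint Bergman ball $B(z_j,\frac{\eta}{4})$,
\[
\frac{(1-|z_j|^2)^{b-1}}{(1-\overline{z_j}w)^{b+1}}
=\frac{C_j}{|B(z_j,\frac{\eta}{4})|}\int_{B(z_j,\frac{\eta}{4})}\frac{(1-|z|^2)^{b-1}}{(1-\bar{z}w)^{b+1}}\,dA(z),
\]
so that $f'$ becomes the operator of Theorem \ref{thmopS} (with $\alpha=b-1$, hence the hypothesis $b>2$) applied to the step function $\sum_j\lambda_j\overline{z_j}C_j\frac{1-|z_j|^2}{|B(z_j,\frac{\eta}{4})|}\chi_{B(z_j,\frac{\eta}{4})}$. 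Boundedness of $S$ on $L^2(d\nu)$ gives the norm comparison, and the disjointness of the balls plus Lemma \ref{lemPmu} gives the bound by $\sum_j|\lambda_j|^2P_\mu(z_j)$ directly. If you insist on a duality formulation, you must first establish a discrete Carleson-type embedding for the separated sequence with respect to $d\nu$, which amounts to the same disjointness computation in disguise; as written, your step does not close.
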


\begin{proof}
\noindent For part (i), recall that an equivalent norm for the Dirichlet type spaces $D(\mu)$ is given by (see, for example, \cite[Lemma 2.3]{GR3})

\[\|f\|^2_{D(\mu)}\thickapprox|f(0)|^2 + \int_{\D}|f'(z)|^2P_{\mu}(z)dA(z).\]

If we define the space $D_0(\mu):=\{f\in D(\mu): f(0)=0\}$ with the norm
\[
\|f\|_{D_0(\mu)}=\Big(\int_{\D}|f'(z)|^{2}P_{\mu}(z)dA(z)\Big)^{\frac{1}{2}},
\] then $f-f(0)\in D_0(\mu)$ for $f\in D(\mu)$. Moreover, the space $D(\mu)$ can be written as \[D(\mu)=D_0(\mu)\oplus \C.\]

For $b>2$, assume that $f\in D_0(\mu)$. Then $f\in H^{2}$ and $f'\in A_{1}^{2}\subset A_{b-1}^{2}$, the weighted Bergman spaces. By the reproducing formula of the Bergman space, we have
\[
f'(z)=\frac{b}{\pi}\int_{\D}\frac{(1-|w|^2)^{b-1}}{(1-\bar{w}z)^{b+1}}f'(w)dA(w).
\]
Since $\{z_j\}^{\infty}_{j=1}$ is $\frac{\eta}{2}$-separated and $\eta$-dense, then there exists a disjoint partition $\{D_j\}^{\infty}_{j=1}$ of $\D$
\[
f'(z)=\frac{b}{\pi}\sum^{\infty}_{j=1}\int_{D_j}\frac{(1-|w|^2)^{b-1}}{(1-\bar{w}z)^{b+1}}f'(w)dA(w).
\]
Now define the linear operator $A$ on $D_0(\mu)$ by
\[
A(f)(z)=\frac{b}{\pi}\sum^{\infty}_{j=1}f'(z_j)|D_j|\frac{(1-|z_j|^2)^{b-1}}{\bar{z_j}(1-\bar{z_j}z)^{b}}.
\]
We will show first that
\begin{equation}\label{eqAinv}
\|f-A(f)\|_{D_0(\mu)}\leq C\eta\|f\|_{D_0(\mu)}.
\end{equation}
Notice that
\begin{eqnarray*}
\big|f'(z)-A(f)'(z)\big|&\leq&\frac{b}{\pi}\sum^{\infty}_{j=1}\int_{D_j}|f'(w)||k_z(w)-k_z(z_j)|dA(w)\nonumber\\
&&+\frac{b}{\pi}\sum^{\infty}_{j=1}\int_{D_j}|f'(w)-f'(z_j)||k_z(z_j)|dA(w)\nonumber\\
&=&I_1+I_2.
\end{eqnarray*}
From Lemma \ref{lemkernel}, we get
\[
I_1\leq C\eta \int_{D}|f'(w)||k_z(w)|dA(w).
\]
From \cite[p.394]{WX}, we have
\[
I_2\leq C\eta\int_{\D}|f'(w)||k_z(w)|dA(w).
\]
Using Theorem \ref{thmopS} yields
\begin{eqnarray*}
\int_{\D}|f'(z)-A(f)'(z)|^{2}P_{\mu}(z)dA(z)
&\leq&C\eta\int_{\D}\Big|\int_{\D}|k_{z}(w)||f'(w)|dA(w)\Big|^{2}P_{\mu}(z)dA(z)\nonumber\\
&\leq&C\eta\int_{\D}|f'(z)|^{2}P_{\mu}(z)dA(z).
\end{eqnarray*}
Thus, inequality (5.2) holds.

Now, define the operator $\mathcal{A}:D(\mu)\to D_0(\mu)$ as
\[
\mathcal{A}(f-f(0))(z):=\frac{1}{\pi}\sum_{j=1}^\infty f'(z_j)|D_j|\frac{(1-|z_j|^2)^{b-1}}{\overline{z_j}} \left(\frac{1}{(1-\overline{z_j}z)^b}-1\right).
\]

In other words, $\mathcal{A}$ is the operator $A$ followed by the projection into the space $D_0(\mu)$.

Consider the operator $B:D(\mu)\to D(\mu)$ defined as \[B=\left(
                                                                  \begin{array}{cc}
                                                                    \mathcal{A} & 0 \\
                                                                    0 & 1 \\
                                                                  \end{array}
                                                                \right)\]
Then, using inequality (5.2), we get
\begin{eqnarray*}
\|(I-B)f\|^2_{D(\mu)} &=& \|f-\mathcal{A}(f-f(0))-f(0)\|^2_{D(\mu)}\\
&=& \|f-A(f-f(0))-A(f-f(0))(0)-f(0)\|^2_{D(\mu)}\\
&=& \|(f-f(0))-A(f-f(0))\|^2_{D_0(\mu)}\\
&\leq& C\eta \|f-f(0)\|_{D_0(\mu)}\\
&\leq& C\eta \|f\|_{D(\mu)},
\end{eqnarray*}
where $I$ stands for the identity operator acting on $D(\mu)$.
Taking $\eta>0$ small enough, we have the invertibility of the operator $B$. Its bounded inverse is defined by
\[
B^{-1}=(I-(I-B))^{-1}=\sum^{\infty}_{n=0}(I-B)^{n}.
\]
We have constructed an approximation operator $B$ with bounded inverse. For any $f\in D(\mu)$, we can write
\begin{eqnarray*}
f(z)&=& BB^{-1}f(z)\\
 &=& \mathcal{A}\mathcal{A}^{-1}(f-f(0))(z)+f(0)\\
 &=& \frac{b}{\pi}\sum_{j=1}^\infty (\mathcal{A}^{-1}(f-f(0)))'(z_j)|D_j|\frac{(1-|z_j|^2)^{b-1}}{\overline{z_j}} \left(\frac{1}{(1-\overline{z_j}z)^b}-1\right)+f(0)\\
 &=& f(0)+ \sum_{j=1}^\infty \lambda_j(1-|z_j|^2)^b\left(\frac{1}{(1-\overline{z_j}z)^b}-1\right),
\end{eqnarray*}
where \[\lambda_j=\frac{b(\mathcal{A}^{-1}(f-f(0)))'(z_j)|D_j|}{\pi \overline{z_j}(1-|z_j|^2)}.\]

We now to show that
\[
\sum^\infty_{j=1}|\lambda_j|^2P_{\mu}(z_j) <C\|f\|_{D(\mu)}^{2}.\eqno(5.3)
\]
By the mean value theorem, we have
\begin{eqnarray*}
\sum^{\infty}_{j=1}|\lambda_j|^2 P_{\mu}(z_{j})
&\leq&C\sum^{\infty}_{j=1}\frac{|D_j|^{2}}{(1-|z_j|^2)^{2}}|\mathcal{A}^{-1}(f-f(0))'(z_j)|^{2}P_{\mu}(z_{j})\nonumber\\
&\leq&C\sum^{\infty}_{j=1}\frac{|D_j|^{2}(1-|z_j|^2)^{-2}}{|B(z_j,\frac{\eta}{4})|}
\int_{B(z_j,\frac{\eta}{4})}|\mathcal{A}^{-1}(f-f(0))'(z)|^{2}P_{\mu}(z)dA(z)\nonumber\\
&\leq&C\int_{\D}|\mathcal{A}^{-1}(f-f(0))'(z)|^{2}P_{\mu}(z)dA(z)\nonumber\\
&\leq&\|\mathcal{A}^{-1}(f-f(0))\|_{D_0(\mu)}^{2}
\leq C\|f\|_{D(\mu)}^{2}.
\end{eqnarray*}
Thus (5.3) is proved.

Next we  show part (ii). Suppose that $\sum_{j=1}^\infty |\lambda_j|^2 P_\mu (z)\delta_{z_j}$ is a $\mu$-Carleson measure. Then there exists a constant $C>0$ such that for every function $f\in D(\mu)$,
\[
\sum_{j=1}^\infty |\lambda_j|^2 |f(z_j)|^2P_\mu(z_j)\leq C\|f\|^2_{D(\mu)}.
\]In particular, if $f\equiv 1$, we have that
\[
\sum_{j=1}^\infty |\lambda_j|^2P_\mu(z_j)<C.\eqno(5.4)
\]

It is sufficient to show that $f\in D(\mu)$ for $f$ defined as in equation \eqref{eqdecomp}. In this case,
\[
f'(w)=b\sum^{\infty}_{j=1}\lambda_j\bar{z_j}\frac{(1-|z_j|^2)^b}{(1-\bar{z_j}w)^{b+1}}.
\]
Applying the substitution $z=\frac{z_j-\zeta}{1-\bar{z_j}\zeta}$ in the following integration, we know that there is a positive constant (cf. \cite{WX})
\[
C_j=\frac{\pi(\frac{e^{\eta}-1}{e^{\eta}+1})^2}{1-(\frac{e^{\eta}-1}{e^{\eta}+1})^2|z_j|^2}
\frac{2b}{1-(\frac{4e^{\eta}}{(e^{\eta}+1)^2})^b},\,\,\,\,\,j=1,2,\cdots,
\]
such that
\[
\int_{B(z_j,\frac{\eta}{4})}\frac{(1-|z|^2)^{b-1}}{(1-\bar{z}w)^{b+1}}dA(z)=
\frac{|B(z_j,\frac{\eta}{4})|}{C_j}\frac{(1-|z_j|^2)^{b-1}}{(1-\bar{z_j}w)^{b+1}}.
\]
Therefore, for $b>2$,
\begin{eqnarray*}
f'(w)&=&b\sum^{\infty}_{j=1}\lambda_j\bar{z_j}C_j\frac{1-|z_j|^2}{|B(z_j,\frac{\eta}{4})|}
\int_{\D}\frac{(1-|z|^2)^{b-1}}{(1-\bar{z}w)^{b+1}}\chi_{B(z_j,\frac{\eta}{4})}(z)dA(z)\nonumber\\
&=&b\int_{\D}\frac{(1-|z|^2)^{b-1}}{(1-\bar{z}w)^{b+1}}
\Big(\sum^{\infty}_{j=1}\lambda_j\bar{z_j}C_j\frac{1-|z_j|^2}{|B(z_j,\frac{\eta}{4})|}
\chi_{B(z_j,\frac{\eta}{4})}(z)\Big)dA(z).
\end{eqnarray*}
Consequently, if
\[
\int_{\D}\Big|\sum^{\infty}_{j=1}\lambda_j\bar{z_j}C_j\frac{1-|z_j|^2}{|B(z_j,\frac{\eta}{4})|}
\chi_{B(z_j,\frac{\eta}{4})}(z)\Big|^2P_{\mu}(z)dA(z)<\infty,\eqno(5.5)
\]
then by Theorem \ref{thmopS}
\[
\int_{\D}|f'(z)|^2P_{\mu}(z)dA(z)\leq C\int_{\D}\Big|\sum^{\infty}_{j=1}\lambda_j\bar{z_j}C_j\frac{1-|z_j|^2}{|B(z_j,\frac{\eta}{4})|}
\chi_{B(z_j,\frac{\eta}{4})}(z)\Big|^2P_{\mu}(z)dA(z).
\]
Hence $f\in D(\mu)$. So, it remains to  show that the inequality (5.5) holds.

Since $\{B(z_j,\frac{\eta}{4})\}^{\infty}_{j=1}$ is a set of disjoint Bergman discs, then the sequence $\{C_j\}^{\infty}_{j=1}$ is bounded. Finally, putting  Lemmas \ref{lemdecomp1}, \ref{lemdecomp2},  \ref{lemPmu} and (5.4), we have
\begin{eqnarray*}
&&\int_{\D}\Big|\sum^{\infty}_{j=1}\lambda_j\bar{z_j}C_j\frac{1-|z_j|^2}{|B(z_j,\frac{\eta}{4})|}
\chi_{B(z_j,\frac{\eta}{4})}(z)\Big|^2P_{\mu}(z)dA(z)\nonumber\\
&\leq&C\int_{\D}\sum^{\infty}_{j=1}|\lambda_j|^2\frac{(1-|z_j|^2)^2}{|B(z_j,\frac{\eta}{4})|^2}P_{\mu}(z)dA(z)\nonumber\\
&\leq&C\sum^{\infty}_{j=1}\int_{B(z_j,\frac{\eta}{4})}|\lambda_j|^2\frac{(1-|z_j|^2)^2}{|B(z_j,\frac{\eta}{4})|^2}P_{\mu}(z)dA(z)\nonumber\\
&\leq&C\sum^{\infty}_{j=1}|\lambda_j|^2P_{\mu}(z_j)\leq C.
\end{eqnarray*}
This finishes the proof.
\end{proof}

\newpage
\end{document}